\newtheorem{theorem}{Theorem}
\providecommand{\mod}{\mathop{\rm mod}\nolimits}
\newtheorem{lemma}{Lemma}
\newtheorem{remark}{Remark}
\newtheorem{assum}{Assumption}
\newtheorem{prop}{Proposition}[section]
\theoremstyle{definition}
\title{High-Frequency Two-Dimensional Asymptotic Standing Coastal-Trapped Waves in Nearly Integrable Case}
\author{A.~Yu.~Anikin$^1$\\ email: \href{anikin83@inbox.ru}{anikin83@inbox.ru} \and  V.~V.~Rykhlov$^{1,2}$ \\ email: \href{vladderq@gmail.com}{vladderq@gmail.com}}
\date{$^1$ Ishlinsky Institute for Problems in Mechanics RAS,\\
$^2$ Lomonosov Moscow State University}
\begin{document}
\maketitle

\section*{Abstract}
This paper is a~continuation of research started in \cite{ADNTs21} devoted to explicit asymptotic formulas for standing coastal-trapped waves. Our main goal is to construct formal asymptotic eigenfunctions of the wave operator $\langle \nabla, D(x)\nabla\rangle$ (the spatial part of the wave operator) corresponding to the eigenvalue $\omega\to\infty$ in a~nearly integrable case.

\section*{Introduction}
This paper is a~continuation of research started in~\cite{ADNTs20},~\cite{ADNTs21} devoted to explicit asymptotic formulas for standing coastal trapped waves in a~basin. The basin is a~domain $\Omega\subset\mathbb{R}^2$ given by $\mathcal D(x)>0$ with smooth boundary $\mathcal D(x)=0$, where the function $\mathcal D(x)$ (the depth function) is defined as follows:
\begin{equation}
\label{DefDx}
{\mathcal D}(x)\in C^\infty(\overline \Omega),\quad \text{and}\quad
\mathcal D\big|_{\Omega}>0,\quad \text{and}\quad \frac{\mathcal \partial \mathcal D}{\partial n}\Big|_{\partial \Omega} \ne 0,
\end{equation}
here $n$~is normal to the boundary (the coast) $\partial \Omega$.

Our goal is to construct the explicit form of leading terms for certain series of asymptotic eigenfunctions $\psi$ of the operator $\widehat{\mathcal{H}}$, given as follows
\begin{equation}
\label{EV-eq}
\widehat{\mathcal H}\psi=\mathbf{\omega}^2\psi,\qquad \widehat {\mathcal H}\psi:=- \langle \nabla, \mathcal D(x) \nabla \rangle\psi\equiv-\sum\limits_{j=1}^2\frac{\partial}{\partial x_j}{\mathcal D}(x)\frac{\partial}{\partial x_j}\psi
\end{equation}
as $\omega\to\infty$ (rigorous definitions are given below). These solutions describe waves that are, in a~certain way, analogs of Ursell waves (see \cite{Urs}, \cite{MerZ}, \cite{Zhev}).

The main difficulties with the operator $\widehat{\mathcal{H}}$ are caused by the fact that it degenerates at the boundary. For instance, the standard boundary conditions (e.g., Dirichlet or Neumann) lead to ill-posed problems, and it is reasonable to impose the finite energy condition (see \cite{OleyRadk}) instead of them. This degeneracy is also seen at the level of semiclassical asymptotics, as the conventional semiclassical methods based on quantizing Liouville tori (see, e.g., \cite{MasFed},\cite{Laz}) fail here. The reason is that the trajectories of the classical system associated with~$\widehat{\mathcal{H}}$ cease to exist as their positions approach the boundary (because the momenta go to infinity). As a~result, the classical system, given that it is integrable, folliates the phase space into a~family of noncompact Lagrangian manifolds instead of tori.

An approach to semiclassical asymptotic theory with such type of degeneracy was proposed in \cite{Naz12},\cite{Naz14}. The idea was to compactify the phase space near the boundary via special regularizing change of coordinates. 
By applying this approach, in \cite{ADNTs20}, \cite{ADNTs21}, a~certain class of domains and depth functions was found such that asymptotic solution of \eqref{EV-eq} can be written explicitly. Our goal is to extend these results by considering small perturbations of this case.

Thus, we are going to deal with a~close to integrable problem: the operator has a~principal symbol, which is a~completely integrable Hamiltonian, perturbed by some correction (the subprincipal symbol). The question of constructing asymptotic solutions for eigenfunctions of such operators is nontrivial even in the case without degeneracy. The fundamental work \cite{Laz} treats this question theoretically, though its approach does not lead to explicit formulas in specific problems. These ideas were developed in a~more practical manner in \cite{AnDob}, \cite{AniRyk22}, giving a~general recipe which is as follows. 

Among the family of Liouville tori, we choose an individual torus independent of the semiclassical parameter without bothering if it satisfies the Bohr--Sommerfeld quantization conditions. Next we find a~scalar function (called the amplitude) on the torus satisfying the transport equation (arising from the first approximation of the semiclassical theory). The torus is assumed to be such that this equation is solvable. A~sufficient condition for that is for the torus to be Diophantine. The final expression for the asymptotic eigenfunction is given in terms of the Maslov canonical operator, which is associated with that torus, applied to the amplitude function above times a~certain factor that is defined on the universal covering of the torus. 

Despite the fact that the canonical operator is ill-defined when the quantization conditions are violated, and is fed with a multi-valued function on the torus, these two obstacles (as sketched in \cite{AnDob}) balance each other. In this paper, we carefully state and prove this fact (in Appendix~\ref{pr-co-def}) using modern theory (\cite{DNSh17} as opposed to \cite{MasFed}) of the canonical operator based on phase functions.

To apply this method to our problem, we use a~recently developed alternative approach (see \cite{DN-Uni} and \cite{Unif23}) for semiclassical theory of operators with the degeneracy at the boundary. Its idea (called the uniformization) is to consider the operator as acting on the configuration space of the higher degree. Such a~lifted operator is free of degeneracy, and the conventional semiclassical theory with Liouville tori is applicable there.

We stress that the main purpose of our paper is to obtain the final formulas for asymptotic solutions in a~form as explicit as possible. Shortly speaking, they have the following form. Each solution is localized in some domain bounded by the caustics of two types: coastal caustic (a part of the boundary) or simple caustic (otherwise). The final formula is glued from representations in overlapping neighborhoods of different caustics. These representations are explicit expressions (except for the solution for the transport equation) in terms of Airy (for a~simple caustic) or Bessel (for a~coastal one) function. Needless to say that in the unperturbed way our formulas coincide with those from \cite{ADNTs20}, \cite{ADNTs21}.

The paper is organized as follows. In Section~\ref{Sec_stat}, we give a~formal statement of the problem. In Section~\ref{Sec_gen_th}, we prove theoretical results that the asymptotic solution can be represented in terms of the canonical operator. We start with a~simple case without coastal caustics, where the boundary degeneracy is not relevant. (In fact, this is a~perturbation of the case studied in \cite{Matv}.) Then we proceed to the case with a~shore by using the uniformization argument. In Section~\ref{Sec_exp_for}, we present an explicit representation for the asymptotic solution that were found earlier. In Section~\ref{Sec_ex}, we show some results of practical calculations.

\section{Problem Statement}\label{Sec_stat}
Consider a cylinder 
$$
\mathfrak C = \{u: u_{\rm L}<u< u_{\rm R}\} \times \mathbb{S}^1_v,\qquad
\text{where}\quad u_{\rm L}\in \mathbb{R}\cup\{-\infty\},\quad
u_{\rm R}\in \mathbb{R}\cup\{\infty\},
$$
and $\mathbb{S}^1_v$ is a~circle with the coordinate $v\:{\rm mod}\:2\pi$. Define an~operator on $L^2(\mathfrak{C})$, and the eigenproblem for it by
\begin{equation}\label{mainprobmu}
\widehat H = -h^2\langle \nabla_{u,v}, D_0(u,v)\mathbf{I}+\mu h \mathbf M_1(u,v) \nabla_{u,v}\rangle, \qquad  \widehat H\psi =\mathcal{E}\psi.
\end{equation}
Here $\mu$ is a~discrete parameter ($\mu = 0$ for the \textit{unperturbed} case, $\mu = 1$ for the \textit{perturbed} one),  $h \to 0+$ is a~small parameter; $D_0$ is a function, $\mathbf M_1$ is a~symmetric matrix, both smooth in $u, v$ and real-valued, moreover,
\begin{equation}
\label{Bound_eq}
D_0(u_{\rm L/R},v)=0,\quad \frac{\partial D_0}{\partial u}(u_{\rm L/R},v)\neq 0,\quad \mathbf M_1(u_{L/R},v)=0.
\end{equation}
In case $\mathfrak{C}$ is unbounded, we will always assume that $D_0$ and $\mathbf{M}_1$ at $u\to \pm\infty$ grow not faster than a polynomial together with all their derivatives. 

A~\textit{formal} asymptotic eigenfunction of~$\widehat H$ is a~function $\psi \in L^2(\mathfrak C)$ such that
$$
C_1\le \|\psi\|_{L^2(\mathfrak C)}\le C_2,\quad\Big\| \big(\widehat H - \mathcal E\big) \psi \Big\|_{L^2(\mathfrak C)} = O(h^2),
$$
where $0<C_1<C_2$.

In what follows we deal only with functions $D_0$ from a specific class, which we describe below. Now we show how problem \eqref{mainprobmu} arises from one initially stated in \eqref{DefDx}, \eqref{EV-eq}.

\subsection{Motivation}\label{Sec_Unp}

Let us consider $G=-h^2\langle \nabla_x,\mathbf{M}(x,h)\nabla_x\rangle$, where $\mathbf{M}$ is a~real matrix.
\begin{prop}[see \cite{ADNTs21}, Sec.~2.3]
\label{prop_conf} 
Let $x=X(y)$ be a change of coordinates such that the map $x_1+i x_2 \mapsto y_1 + i y_2$ is conformal, and $T:\psi(x)\mapsto (T\psi)(y)=\psi(X(y))|\det X'(y)|^{1/2}$. Then
\begin{equation}
\label{NewSymb}
T\widehat G T^{-1}=-h^2\langle \nabla_y,(X'(y))^{-1}\mathbf{M}(X(y),h)\big((X'(y))^{-1}\big)^{\rm T}\nabla_y\rangle+h^2a(x,h)
\end{equation}
for a smooth function $a(x,h)$. 
\end{prop}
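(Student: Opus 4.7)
\emph{Plan.} My plan is to compute the conjugation $T\widehat G T^{-1}\tilde\psi$ explicitly by tracking how the two halves of $\widehat G$, the inner gradient and the outer divergence, transform under $x=X(y)$, and then collecting terms. To fix notation, set $Y:=X^{-1}$, $J(y):=|\det X'(y)|$, $\rho(y):=J(y)^{1/2}$, and $\mathbf N(y,h):=(X'(y))^{-1}\mathbf M(X(y),h)\bigl((X'(y))^{-1}\bigr)^{\rm T}$. This matrix $\mathbf N$ is precisely the one that should appear inside the divergence in \eqref{NewSymb}, and it is symmetric because $\mathbf M$ is. With this notation $(T\psi)(y)=\rho(y)\psi(X(y))$ and $(T^{-1}\tilde\psi)(x)=\rho(Y(x))^{-1}\tilde\psi(Y(x))$.

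The calculation then splits into two elementary parts. First, for $\phi:=T^{-1}\tilde\psi$, the chain rule gives $\nabla_x\phi(x)=\bigl((X'(y))^{-1}\bigr)^{\rm T}\nabla_y(\rho^{-1}\tilde\psi)(y)$ at $y=Y(x)$. Second, for any smooth vector field $F$, testing $(\nabla_x\!\cdot\!F)(x)$ against an arbitrary compactly supported $\eta$ and changing variables in the integration-by-parts identity yields the general formula
\[
J(y)\,(\nabla_x\!\cdot\!F)(X(y)) \;=\; \nabla_y\!\cdot\!\bigl[\,J(y)(X'(y))^{-1}F(X(y))\,\bigr],
\]
which converts an $x$-divergence into a $y$-divergence at the cost of a Jacobian. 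Combining these two identities and multiplying by the $\rho(y)$ coming from the outer $T$ yields the closed expression
\[
\bigl(T\widehat G T^{-1}\tilde\psi\bigr)(y) \;=\; -h^2\,\rho(y)^{-1}\,\nabla_y\!\cdot\!\Bigl[\,\rho(y)^2\,\mathbf N(y,h)\,\nabla_y\bigl(\rho(y)^{-1}\tilde\psi\bigr)\Bigr].
\]

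The last step is to unfold this by the Leibniz rule and group terms. After using the $\rho^{\pm 2}$ factors to cancel, one is left with the expected principal piece $-h^2\nabla_y\!\cdot\!(\mathbf N\nabla_y\tilde\psi)$, a zeroth-order piece $h^2\rho^{-1}\nabla_y\!\cdot\!(\mathbf N\nabla_y\rho)\,\tilde\psi$, and two first-order cross-terms proportional to $(\nabla_y\rho)\cdot\mathbf N\nabla_y\tilde\psi$ and $-(\nabla_y\tilde\psi)\cdot\mathbf N\nabla_y\rho$. The decisive observation, and the only genuinely nontrivial point of the proof, is that these two first-order terms \emph{cancel} owing to the symmetry of $\mathbf N$, which gives $(\nabla_y\rho)\cdot\mathbf N\nabla_y\tilde\psi=(\nabla_y\tilde\psi)\cdot\mathbf N\nabla_y\rho$. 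What remains is exactly the claimed form \eqref{NewSymb}, with the explicit smooth coefficient $a(y,h)=\rho(y)^{-1}\nabla_y\!\cdot\!\bigl(\mathbf N(y,h)\nabla_y\rho(y)\bigr)$; smoothness is automatic since $X$ is a diffeomorphism (so $\rho>0$) and $\mathbf M$ is smooth. The only real obstacle is bookkeeping the $\rho$-factors carefully enough to recognize the symmetry-driven cancellation. Incidentally, conformality of $X$ is not actually needed for the identity itself; it becomes relevant only downstream, where it ensures that a scalar coefficient matrix $\mathbf M=D\mathbf I$ is carried to another scalar matrix under the conjugation.
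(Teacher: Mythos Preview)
Your argument is correct and complete. The paper does not actually prove this proposition; it merely cites \cite{ADNTs21}, Sec.~2.3, so there is no in-paper proof to compare against. Your derivation via the Piola identity $J\,(\nabla_x\!\cdot F)\circ X=\nabla_y\!\cdot\bigl(J(X')^{-1}(F\circ X)\bigr)$ followed by the Leibniz expansion is the standard route, and the explicit formula $a=\rho^{-1}\nabla_y\!\cdot(\mathbf N\nabla_y\rho)$ you obtain is right.

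One small caveat worth making explicit: your cancellation of the first-order cross-terms uses the symmetry of $\mathbf N$, hence of $\mathbf M$. The paper only says ``$\mathbf M$ is a real matrix'', not a symmetric one; without symmetry, a first-order term survives and $a$ would have to be a first-order differential operator rather than a multiplication operator. In the paper's actual application $\mathbf M=D_0\mathbf I+h\mathbf M_1$ with $\mathbf M_1$ symmetric, so the assumption is harmless, but you should state it. Your closing remark that conformality is not used in the identity itself, only later to keep a scalar matrix scalar, is also correct.
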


Define the unperturbed domain as $\Omega_0:= \mathbf{r}(\mathfrak{C})$, where $\mathbf{r}:u+iv \to x_1+ix_2 = e^{u+iv}$. 

Consider a change of variables: $y=Y(x,h)\equiv x+h\xi(x,h)$. It maps $\Omega_0$ onto the family of the ''perturbed'' domains $\Omega_h\subset\mathbb{R}^2_y$. 

Now assume that for the perturbed domain problem \eqref{EV-eq} is set (of course, with variable $x$ replaced by $y$) with the depth function $\mathcal{D}=\mathcal{D}(y,h)$ satisfying \eqref{DefDx}. In other words, it is assumed that:
\begin{itemize}
\item
$\mathcal{D}(y,h)\equiv \mathcal{D}_0(y)+h\mathcal{D}_1(y,h) = 0$ at $\partial \Omega_h$.
\item
The function $\mathcal{D}_0(y)$ satisfies \eqref{DefDx} in $\Omega_0$.
\end{itemize}
It can be easily derived that condition $\frac{\partial D}{\partial n}(y,h) \neq 0$ at $\Omega_h$ is met automatically. 

Straightforward computations show that the operator $\widehat H = -\langle\nabla_y \mathcal{D}\nabla_y\rangle$ satisfies 
\begin{equation}
\label{SymbMatForm}
T\widehat HT^{-1}= -h^2\langle\nabla_x, \widetilde{\mathbf{M}} p \nabla_x\rangle + h^3 \langle Q(x), \nabla_x\rangle + O(h^4), \quad 
\end{equation}
where $\widetilde{\mathbf{M}}=D_0(x) \mathbf{I}+h \widetilde{ \mathbf{M}}_1(x)+O(h^2)$, and
$$
\widetilde{ \mathbf{M}}_1(x) = \big(D_1(x,0)+\frac{\partial D_0}{\partial x}(x)\xi(x,h)\big) \mathbf{I} - 2 D_0(x)\big(\frac{\partial \xi}{\partial x}\big)^{\rm T},\quad Q(x) = D_0(x) \begin{pmatrix} \Delta \xi_1 \\ \Delta \xi_2 \end{pmatrix},\quad \Delta = \frac{\partial^2}{\partial x_1^2}+\frac{\partial^2}{\partial x_2^2}.
$$
From this we obtain that $\widetilde{ \mathbf{M}}_1\big|_{\partial \Omega_0}=0$ if $\langle\frac{\partial D_0}{\partial x}(x),\xi(x,h)\rangle\big|_{\partial \Omega_0} = 0$. The symbol of the operator~$\widehat H$ is
$$
H = H_0 + h H_1,\quad \text{where}\quad H_0 = D_0 |p|^2,\quad H_1 = (D_1+\frac{\partial D_0}{\partial x}\xi)|p|^2 - 2 D_0 \langle p, \big(\frac{\partial \xi}{\partial x}\big)^{\rm T} p\rangle + \langle Q(x),p\rangle.
$$

\subsection{Unperturbed Problem}

Let us recall the class of domains and depth functions introduced in \cite{ADNTs21} that ensure the integrability of corresponding classical problem.

Define $D_0(u,v)=\frac{1}{f(u)+g(v)}$, where $g\in C^\infty(\mathbb{S}^1)$, $f \in C^\infty((u_{\rm L}, u_{\rm R}))$, $f>0$, and
$$
\lim\limits_{u\to u_{\rm L/R}} f(u)(u-u_{\rm L}) = M_{\rm L/R} \neq 0,
\qquad \text{if} \quad u_{\rm L/R} \quad \text{is finite}.
$$

Consider a~Hamiltonian system with the depth function~$D_0$ chosen as above (the main symbol of operator \eqref{mainprobmu}):
\begin{equation}
\label{main_Ham} 
H_0= \big(p_u^2+p_v^2\big)D_0(u,v)\equiv \frac{p_u^2+p_v^2}{f(u)+g(v)}\equiv \frac{f_2(u)\big(p_u^2+p_v^2\big)}{f_1(u)+f_2(u)g(v)}
\end{equation} 
in the phase space $T^*\mathfrak{C}=\mathfrak{C}\times \mathbb{R}^2$. This system is completely integrable, the first integral is $F=\frac{p_u^2g(v)-p_v^2f(u)}{f(u)+g(v)}$. Then the invariant manifolds
\begin{equation}
\label{MEk}
\Lambda=M_{E,\kappa} := \{(u,v,p_u,p_v)\vert H_0 = E, \, F = -\kappa\}
\end{equation}
are Lagrangian and each of them consists of two connected components
$$
\Lambda = \Lambda^+ \cup \Lambda^-,\qquad \text{where} \quad \Lambda^\pm \cong \Lambda_1 \times \Lambda^\pm_2,\quad \text{and}
$$

\begin{equation}
\label{Lagr0}
\Lambda_1 = \{ (u, p_u): \, p_u^2 = E f(u) - \kappa \}, \quad \Lambda_2 =\Lambda_2^+\cup\Lambda^-= \{ (v, p_v): \, p_v^2 = E g(v) + \kappa \}.
\end{equation}
Here $\Lambda_2^\pm$ are two connected components, corresponding to the values $\pm p_v >0$ respectively. Consider $\Lambda^+$, omitting everywhere below index $+$.

Lagrangian manifolds $M_{E,\kappa}$ give rise to a series of asymptotic eigenfunctions that are localized in $\mathfrak{C}_{E,\kappa}$ -- a projection of $M_{E,\kappa}$ onto the configuration space $\mathcal{C}$. 

We will distinguish between two cases. In the first one, 'without a shore': $\mathfrak{C}_{E,\kappa}\cap \partial\mathfrak{C}=\emptyset$, and $M_{E,\kappa}$ is a torus. In the second case, 'with a shore', which takes place otherwise, $M_{E,\kappa}$ is a cylinder. 
\section{General Theorem}\label{Sec_gen_th}

\subsection{Case without Shore}
To construct asymptotic eigenfunctions, we will use the method of the Maslov canonical operator \cite{MasFed}. In Appendix~\ref{pr-co-def}, we recall a~modern definition of the canonical operator, and give some generalizations that are of importance for us. 

In a~nutshell, an~asymptotic eigenfunction $\psi$ is generated by a~Lagrangian manifold $\Lambda$ from the family above by the formula: $\psi = K_{\Lambda}[A]$ for a specially chosen function $A\in C^\infty(\Lambda,\mathbb{C})$, where $K_{\Lambda}: C^\infty(\Lambda)\to C^\infty(\mathfrak{C}\times (0,1]_h)$ is the canonical operator on~$\Lambda$. The manifold $\Lambda$ is supposed to meet the quantization conditions, which have the following form. For $\nu=(\nu_1,\nu_2) \in\mathbb{Z}^2$, define {\it actions}:
\begin{equation}\label{BS0}
I^\nu_j:=\frac{1}{2 \pi} \oint\limits_{\gamma_j} p\,dx = h \Big(\nu_j + \frac{{\rm ind}_{\gamma_j}}{4}\Big)
\end{equation}
for $j=1,2$ with basis cycles $\gamma_j$ on the torus $\Lambda$, where ${\rm ind}_{\gamma_j}$ is the Maslov index of the cycle $\gamma_j$. By taking $\Lambda_{1,2}$ as basis cycles, and by denoting
\begin{equation}
\label{Actions0}
S_1(u_1,u_2)= \int\limits_{u_1}^{u_2} \sqrt{E f_1(\widetilde u)/f_2(\widetilde u) - \kappa} d\,\widetilde u, \quad S_2(v) = \int\limits_{0}^{v} \sqrt{E g(\widetilde v) + \kappa} \,d\widetilde v,
\end{equation}
and also $S_{\rm L}(u) =S_1(u_{\rm L}^0,u)$, $S_{\rm R}(u)=S_1(u,u_{\rm R}^0)$, the quantization conditions \eqref{BS0} can be written in an explicit form as follows
\begin{equation}
\label{BS1}
2\pi I_1\equiv 2 S_{\rm L}(u_{\rm R}^0)\equiv 2 S_{\rm R}(u_{\rm L}^0)=2\pi h(\nu_1+1/2),\qquad 2\pi I_2\equiv S_2(2\pi)=2\pi h\nu_2.
\end{equation}

In what follows, it will be convenient to parametrize the family $\Lambda = M_{E,\kappa}$ by the values of $I=(I_1,I_2)$. We will use the notation $\Lambda = \Lambda(I)$.

Asymptotic solution of unperturbed problem is given by $\psi=K_{\Lambda}[1]$. More precisely, $\Lambda$ must be endowed with a measure which is invariant with respect to the Hamiltonian flow. Let us take coordinates on $\Lambda$ that agree with the invariant measure. Namely, let $\alpha=(\alpha^1,\alpha^2)\mod 2\pi$) be coordinates such that the Hamiltonian flow on $\Lambda$ is given as:
\begin{equation}
\label{Gd_Crd}
\dot \alpha = \frac{\omega}{\mathbf{f(\alpha)}},
\end{equation}
where $\omega=\frac{\partial H}{\partial I}$ is the frequency vector, and $\mathbf{f}(\alpha)$ is some smooth nonvanishing function. (The invariant measure is then given as $d\mu = \frac{d\alpha}{\mathbf{f}}$.) Later on we will show that such coordinates can be constructed explicitly. (These coordinates were not constructed in \cite{ADNTs21} since they are essential only for the perturbed case).

In the perturbed case, following \cite{AnDob}, we fix an~invariant torus $\Lambda(I)$ for a~given $I$ independent of $h$. For each $\nu\in\mathbb{Z}^2$ define \textit{action defect} $q \equiv I^\nu - I$ (see \cite{AnDob, AniRyk22}; see also \textit{mismatch of quantization conditions} in \cite{LazQuas}). Define $e^{\frac{i \langle q, \alpha\rangle}h}$ as a function on the universal covering of the manifold $\Lambda(I)$. Although the quantization conditions on $\Lambda(I)$ are violated (for generic $h$), and $e^{\frac{i \langle q, \alpha\rangle}h}$ is not even a function on $\Lambda(I)$, the following statement holds true:
\begin{theorem}
Under the assumptions above,
$K_{\Lambda(I)}^{d\mu}[\overline A]$, where $\overline A=A e^{\frac{i \langle q, \alpha \rangle}{h}}$, is well-defined. 
\end{theorem}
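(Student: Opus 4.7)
The strategy I would follow is to show that the cocycle ambiguity in the definition of $K_{\Lambda(I)}$ (which, for a torus violating Bohr--Sommerfeld, prevents the canonical operator applied to an ordinary function from gluing into a single-valued object on $\mathfrak{C}$) is cancelled exactly by the multi-valuedness of $\overline A = A\,e^{i\langle q,\alpha\rangle/h}$. I would work throughout in the modern phase-function description of the canonical operator recalled in Appendix~\ref{pr-co-def}.

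First, I would cover $\Lambda(I)$ by simply connected open charts $\{U_\beta\}$, each carrying a phase function $\Phi_\beta$. On each $U_\beta$ the coordinates $\alpha$ admit a single-valued smooth lift $\alpha_\beta\colon U_\beta\to\mathbb{R}^2$, so $\overline A|_{U_\beta} := A|_{U_\beta}\, e^{i\langle q,\alpha_\beta\rangle/h}$ is a bona fide smooth function, and the formula from the appendix yields a well-defined local representative $\bigl(K^{d\mu}_{\Lambda(I)}[\overline A]\bigr)_\beta$ on each chart.

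Next I would verify consistency over overlaps. By the general theory, changing the chart from $U_\beta$ to $U_{\beta'}$ multiplies the local representative by a transition phase of the form $\exp\bigl(i(\Phi_{\beta'}-\Phi_\beta)/h - i\tfrac{\pi}{2}\Delta\mu_{\beta\beta'}\bigr)$; simultaneously $\alpha_\beta$ and $\alpha_{\beta'}$ differ by the monodromy $\Delta\alpha_{\beta\beta'}$, so the local values of $\overline A$ differ by $e^{i\langle q,\Delta\alpha_{\beta\beta'}\rangle/h}$. Since $\pi_1(\Lambda(I))$ is generated by the basis cycles $\gamma_1,\gamma_2$, it is enough to check cancellation along each $\gamma_j$. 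There the canonical-operator transition equals
\[
\exp\!\Big(\tfrac{2\pi i}{h}\oint_{\gamma_j}p\,dx - i\tfrac{\pi}{2}\,{\rm ind}_{\gamma_j}\Big) = \exp\!\Big(\tfrac{2\pi i}{h}I_j - i\tfrac{\pi}{2}\,{\rm ind}_{\gamma_j}\Big),
\]
while $\alpha$ shifts by $2\pi e_j$, so $\overline A$ contributes the factor $\exp(2\pi i q_j/h)$. Substituting $I_j = I^\nu_j - q_j$ together with $I^\nu_j = h(\nu_j + {\rm ind}_{\gamma_j}/4)$ from~\eqref{BS0} collapses the product to $1$.

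I expect the main technical obstacle to be not the algebraic cancellation itself -- which is essentially dictated by~\eqref{BS0} -- but the careful formalisation of the overlap step in the phase-function framework of~\cite{DNSh17}, namely checking that the Fourier--Maslov representatives really transform by the displayed phase with the correct Maslov normalisation under an arbitrary change of chart (not only across caustic walls). Once that ingredient is in place, the vanishing of the cocycle on the two generators extends to all of $\pi_1(\Lambda(I))$, so the local pieces glue into a well-defined element of $C^\infty(\mathfrak{C}\times(0,1]_h)$, which by definition is $K^{d\mu}_{\Lambda(I)}[\overline A]$.
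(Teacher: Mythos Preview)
Your proposal is correct and follows essentially the same route as the paper: the appendix proves Proposition~\ref{CO-QC-gen} by reducing well-definedness to showing that, on any overlap, the canonical-operator transition factor (computed via Lemmas~2--4 of~\cite{DNSh17}) is exactly compensated by the Bloch monodromy $e^{2\pi i\langle k,q\rangle}$ of $\overline A$ after decomposing the connecting loop into basis cycles, which is precisely your cocycle-cancellation argument. Apart from a stray $2\pi$ in your displayed transition factor (it should read $\exp\bigl(\tfrac{i}{h}\oint_{\gamma_j}p\,dx - i\tfrac{\pi}{2}\,{\rm ind}_{\gamma_j}\bigr)$, which then equals $\exp\bigl(\tfrac{2\pi i}{h}I_j - i\tfrac{\pi}{2}\,{\rm ind}_{\gamma_j}\bigr)$), the computation and the identification of the technical content with the overlap lemmas of~\cite{DNSh17} match the paper's proof.
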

\begin{proof}
The sketch of the proof was given in \cite{AnDob}. Here we give a complete proof based on the modern representation of the canonical operator from \cite{DNSh17}. See Proposition~\ref{CO-QC-gen} in Appendix~\ref{pr-co-def}.
\end{proof}

Let $\Lambda$ be some fixed torus. Denote by $H_{\rm sub}$ the ``subprincipal symbol'' of the operator, i.e. $H_{\rm sub} := (p_u,p_v)\mathbf{M}_1(u,v)(p_u,p_v)^{\rm T}$. Consider the following equation
\begin{equation}\label{transporteqdefect2}
\Big(\Big\langle \omega, \frac{\partial}{\partial \alpha} \Big\rangle + i \mathbf{f}(\alpha)(H_{\rm sub}\big|_{\Lambda(I)} - \lambda) + i \frac{\langle \omega, q \rangle}{h}\Big) A = 0
\end{equation}
(called the {\it transport equation}) with a~parameter $\lambda$, and
an unknown function $A(\alpha)\in C^\infty(\Lambda)$. Let us recall a standard fact:
\begin{lemma}
\label{tr-sol-lambda}
If vector $(\omega_1,\omega_2)$ is Diophantine, then there is a solution of \eqref{transporteqdefect2} for
$$
\lambda = \frac{\int_{\Lambda(I)} \big( \mathbf{f}(\alpha) H_{\rm sub}\big|_{\Lambda(I)}(\alpha) + \frac{\langle \omega, q \rangle}{h}  \big) d\alpha}{\int_{\Lambda(I)} \mathbf{f}(\alpha)d\alpha}.
$$
\end{lemma}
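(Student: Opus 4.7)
The plan is to reduce the transport equation to the standard cohomological equation on the torus $\Lambda(I) \cong \mathbb{T}^2$. Set $L := \langle \omega, \partial/\partial \alpha\rangle$ and $V(\alpha) := \mathbf{f}(\alpha)\big(H_{\rm sub}|_{\Lambda(I)}(\alpha) - \lambda\big) + \langle \omega, q\rangle/h$, so that the equation reads $LA = -iVA$. I would look for a nowhere-vanishing solution of the exponential form $A(\alpha) = e^{i\Phi(\alpha)}$ with $\Phi \in C^\infty(\Lambda(I))$, which converts the transport equation into the scalar first-order linear equation
\begin{equation*}
L\Phi = -V.
\end{equation*}

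Next I would expand everything in Fourier series on $\mathbb{T}^2$. Writing $V = \sum_{n \in \mathbb{Z}^2} V_n e^{i\langle n,\alpha\rangle}$ and $\Phi = \sum_n \Phi_n e^{i\langle n,\alpha\rangle}$, the equation becomes $i\langle n,\omega\rangle \Phi_n = -V_n$ for each $n$. The mode $n=0$ imposes the solvability obstruction $V_0 = 0$, i.e.\ $\int_{\Lambda(I)} V\,d\alpha = 0$; direct rearrangement of this identity is exactly the formula for $\lambda$ stated in the lemma, and with that choice the obstruction is removed while $\Phi_0$ remains a free integration constant. For every $n\neq 0$ I would set $\Phi_n := iV_n/\langle n,\omega\rangle$.

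To verify that the resulting series defines a smooth $\Phi$, I would invoke the Diophantine hypothesis on $\omega$: there exist $C,\tau > 0$ such that $|\langle n,\omega\rangle| \geq C|n|^{-\tau}$ for all $n \in \mathbb{Z}^2\setminus\{0\}$. Since $\mathbf{f}$ and $H_{\rm sub}|_{\Lambda(I)}$ are smooth, so is $V$, and its Fourier coefficients $V_n$ decay faster than any negative power of $|n|$. Hence $|\Phi_n| \leq C^{-1}|n|^{\tau}\,|V_n|$ is still rapidly decreasing, so the series for $\Phi$ converges in $C^\infty(\mathbb{T}^2)$. Setting $A := e^{i\Phi}$ (since $V$ is real we in fact get $|A|\equiv 1$, so $A$ does not vanish) produces the desired solution. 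The one potentially delicate point, the small divisors $\langle n,\omega\rangle$, is precisely what the Diophantine assumption is designed to neutralize; the matching between the zero-mode obstruction and the explicit formula for $\lambda$ is then a one-line integration.
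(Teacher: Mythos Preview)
Your argument is correct and is exactly the standard proof of this cohomological-equation lemma: substitute $A=e^{i\Phi}$, reduce to $\langle\omega,\partial_\alpha\rangle\Phi=-V$, kill the zero Fourier mode by the stated choice of $\lambda$, and solve mode-by-mode with the Diophantine bound controlling the small divisors so that $\Phi\in C^\infty(\mathbb T^2)$.

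For comparison, the paper does not actually prove this lemma at all: it introduces it with ``Let us recall a standard fact'' and gives no argument. So there is nothing to compare approaches with; your write-up simply supplies the classical justification the authors omitted. One minor remark: your parenthetical that $V$ is real (hence $\Phi$ can be taken real and $|A|\equiv 1$) is not needed for the lemma as stated, which only asks for \emph{some} smooth solution $A$, but it is a nice bonus observation.
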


Here is the general statement describing a series of asymptotic eigenvalues and eigenfunctions in the case without a~shore:
\begin{theorem}
\label{PrAsSolA}
Let~$\alpha$ be coordinates on $\Lambda$ as indicated above. Suppose that $A\in C^\infty(\Lambda)$ and $\lambda\in \mathbb{R}$ is a~solution of \eqref{transporteqdefect2}.
Then
$$
\big\|\big(\widehat H-\mathcal{E}\big)\widetilde\psi\big\|=O(h^2),\qquad \text{where}
\quad \mathcal{E}=E+h\lambda,\quad \widetilde\psi=K^{d\alpha}_{\Lambda(I)} [A e^{\frac{i \langle q, \alpha \rangle}{h}}],
$$
uniformly with respect to $\nu$ such that $|I_\nu-I|\le C h$.
\end{theorem}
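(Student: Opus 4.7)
The plan is to reduce the assertion to the standard commutation formula for the semiclassical operator $\widehat H$ with the canonical operator $K_{\Lambda(I)}^{d\alpha}$, adapted to amplitudes defined on the universal cover of $\Lambda(I)$. The preceding theorem already guarantees that $\widetilde\psi=K_{\Lambda(I)}^{d\alpha}[A e^{i\langle q,\alpha\rangle/h}]$ is a bona fide function on $\mathfrak C$ despite the multi-valuedness of the amplitude, and the local nature of the commutation formula should allow it to carry over unchanged. The coordinates $\alpha$ from \eqref{Gd_Crd} are chosen so that $d\alpha$ is the invariant measure of the Hamiltonian flow; this eliminates the usual half-divergence correction from the transport operator and keeps the bookkeeping clean.

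First I would apply the commutation formula
$$\widehat H\, K_{\Lambda(I)}^{d\alpha}[B] \;=\; K_{\Lambda(I)}^{d\alpha}\!\bigl[H_0|_{\Lambda(I)}\, B + \tfrac{h}{i}\,L B\bigr] + O(h^2),\qquad L = \partial_t + i H_{\rm sub}|_{\Lambda(I)},$$
with $B = A e^{i\langle q,\alpha\rangle/h}$, using $H_0|_{\Lambda(I)}\equiv E$ and $\partial_t = \mathbf{f}(\alpha)^{-1}\langle\omega,\partial_\alpha\rangle$. A direct computation yields
$$L B \;=\; \mathbf{f}(\alpha)^{-1}\, e^{i\langle q,\alpha\rangle/h}\Bigl(\langle\omega,\partial_\alpha\rangle A + i\mathbf{f}(\alpha)\, H_{\rm sub}|_{\Lambda(I)}\, A + i\,\frac{\langle\omega,q\rangle}{h}\, A\Bigr),$$
and the parenthesized expression equals $i\mathbf{f}(\alpha)\lambda A$ by the transport equation \eqref{transporteqdefect2}; hence $L B = i\lambda B$. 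Substituting back, the $O(h)$ correction in the commutation formula becomes $h\lambda\,\widetilde\psi$, which gives $(\widehat H - E - h\lambda)\widetilde\psi = O(h^2)$ in $L^2$, i.e.\ the claim with $\mathcal E = E + h\lambda$.

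For uniformity in $\nu$ under $|I^\nu - I|\le Ch$, the key point is that the amplitude $A$ and all symbols entering the commutation formula are fixed (independent of $\nu$), while the $\nu$-dependence enters only through factors of $q/h$ produced by differentiating $e^{i\langle q,\alpha\rangle/h}$; these stay uniformly bounded by hypothesis, and the $L^2$-boundedness of the canonical operator on bounded smooth amplitudes delivers a uniform $O(h^2)$ bound. The main obstacle I anticipate is the rigorous justification of the commutation formula for amplitudes living on the universal cover rather than on $\Lambda(I)$ itself. I would resolve this by appealing to the phase-function description of the canonical operator from \cite{DNSh17} used in Appendix~\ref{pr-co-def}: the local $\widehat H$-action on each oscillatory integral chart of $K_{\Lambda(I)}^{d\alpha}$ is governed by stationary-phase analysis that is insensitive to multi-valuedness, and the cocycle compatibility between charts is precisely the issue already handled by Proposition~\ref{CO-QC-gen}.
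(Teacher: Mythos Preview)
Your argument is correct and is essentially the paper's own proof: apply the commutation formula $(\widehat H-\mathcal E)K_{\Lambda(I)}[\overline A]=-ih\,K_{\Lambda(I)}\bigl[(v_{H_0}+i(H_{\rm sub}|_{\Lambda(I)}-\lambda))\overline A\bigr]+O(h^2)$ with $v_{H_0}=\mathbf f(\alpha)^{-1}\langle\omega,\partial_\alpha\rangle$, differentiate the factor $e^{i\langle q,\alpha\rangle/h}$ to generate the extra $i\langle\omega,q\rangle/h$ term, and invoke \eqref{transporteqdefect2} to annihilate the bracket. (One small inaccuracy in your write-up: per the line after \eqref{Gd_Crd} the invariant measure is $d\mu=d\alpha/\mathbf f$, not $d\alpha$ itself; however the paper's commutation formula is likewise written without a half-divergence term, so your computation matches theirs regardless.)
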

\begin{proof} 
Let us use the commutation formula (see \cite{MasFed}) for a~differential operator $\widehat H$ and the canonical operator $K_\Lambda$:
$$
(\widehat H - \widetilde E) K_{\Lambda(I)}[\overline A] = -ih K_{\Lambda(I)}[(v_{H_0}+i (H_{\rm sub}\big|_{\Lambda(I)}-\lambda))\overline A + O(h)],
$$
where $v_{H_0}$ is differentiation along the Hamiltonian vector field, i.e. $v_{H_0} \equiv \frac{d}{dt} = \langle \dot\alpha, \partial/\partial \alpha\rangle = \Big\langle \frac{\omega}{\mathbf f(\alpha)},\partial/\partial \alpha\Big\rangle$.
Applying the differential operator $\Big\langle \frac{\omega}{\mathbf f(\alpha)},\partial/\partial \alpha\Big\rangle$ to a function $\overline A = A e^{\frac{i\langle q, \alpha\rangle}{h}}$, we complete the proof.
\end{proof}
In what follows we give an explicit representation for $\widetilde \psi$ determined by Theorem~\ref{PrAsSolA}.
\subsection{Case with Shore}

Assume now that conditions B are satisfied. 

Let us introduce ``the lifted configuration space'': $\overline{\mathfrak{C}}:=(\mathfrak{C}\times\mathbb{S}^1)/\sim$, where $\sim$ glues the points of each circle $\{z\}\times\mathbb{S}^1$ for each fixed $z\in\partial\mathfrak{C}$. We will endow $\overline{\mathfrak{C}}$ with local coordinates $(u,v,\varphi)$ ``away from the shore'', where $(u,v)\in\mathfrak{C}$, and $\varphi\,{\rm mod}\,2\pi$ for $u\in(u_{\rm L}+a,u_{\rm R}-a)$ (with arbitrary $a> 0$), and local coordinates $(y_1,y_2,v)$ ``near the shore'' with $y=(y_1,y_2)$ determining points of the disc $|y|\le b$. The coordinates are glued via the relations: $(y_1,y_2)=2\sqrt{|u-u_{\rm L/R}|}\mathbf{n}(\varphi)$, where $\mathbf{n}(\varphi)=(\cos\varphi,\sin\varphi)$. On $\overline{\mathfrak{C}}$ there is a natural semi-free action of group $\mathbb{S}^1$ given by $\varphi\mapsto \varphi + \beta$ for $\beta\,{\rm mod}\,2\pi$ away from the shore, and by rotations around origin on the $y$-plane near the shore. Then $\mathfrak{C}$ can be identified with the quotient space of the orbits under the group action: $\mathfrak{C}=\overline{\mathfrak{C}}/\mathbb{S}^1$, and space $C^\infty_{\mathbb{S}^1}(\overline{\mathfrak{C}})$ consisting of functions $\psi\in C^\infty(\overline{\mathfrak{C}})$ that are invariant under action of the group $\mathbb{S}^1$ can be identified with $C^\infty(\mathfrak{C})$. 

Define the following operator acting on $\psi=\psi(y,v)$ supported near the shore: 
\begin{equation}
\label{op_lift_sh}
\widehat{\mathscr H}_{\rm Sh} = h^2 \Big( \Big\langle Y, \mathbf Q(y, v) Y \Big\rangle + \big \langle Z, \mathbf Q(y, v) Z \Big\rangle \Big),
\qquad
\mathbf Q=\frac{4}{y^2}\bigg(D_0\Big(\frac{y^2}{4},v\Big)\mathbf I + h \mathbf M_1 \Big(\frac{y^2}{4},v\Big) \bigg),
\end{equation}
where $Y_1 = -i \frac{\partial}{\partial y_1}$, $Y_2 = -i \frac{y_1}{2}\frac{\partial}{\partial v}$, $Z_1 = - i \frac{\partial}{\partial y_2}$, $Z_2 = -i \frac{y_2}{2} \frac{\partial}{\partial v}$. It follows from Theorem 2.1 in \cite{Unif23} that $\widehat{\mathscr H}_{\rm Sh} \psi= \widehat H \psi$ (where $\widehat H$ is given by \eqref{mainprobmu}), if $\psi\in C^\infty_{\mathbb{S}^1}(\overline{\mathfrak{C}})$. This allows us to define $\widehat {\mathscr{H}}:C^\infty_{\mathbb{S}^1}(\mathfrak{C})\to C^\infty_{\mathbb{S}^1}(\mathfrak{C})$ by setting $\widehat{\mathscr{H}}\psi:=\widehat{\mathscr{H}}_{\rm Sh}\psi$ (respectively, $\widehat{\mathscr{H}}\psi:=\widehat H\psi$) for $\psi$ supported near the shore (respectively, away from the shore).

Consider the standard phase space $T^*\overline{\mathfrak{C}}$ with coordinates $(y,\xi,v,p_v)$ near the shore, and $(u,p_u,v,p_v,\varphi,p_{\varphi})$ away from the shore, where $\xi$ (respectively, $p_\varphi$) are momenta conjugated to $y$ (respectively, $\varphi$). Define a Hamiltonian system given by the symbol of operator $\widehat{\mathscr{H}}$, i.e.
\begin{equation}\label{lifted_ham_y}
\mathscr H_{\rm Sh} = \frac{4}{y^2}\Big(\xi^2 + \frac{y^2}{4} p_v^2\Big) D_0\Big(\frac{y^2}{4},v\Big)=\frac{\xi^2+\frac{y^2}{4}p_v^2,}{f_1\big(y^2/4\big)+\frac{y^2}{4}g(v)}
\end{equation}
(which is a symbol of \eqref{op_lift_sh}) near the shore, and by $\mathscr{H}=H_0$ from \eqref{main_Ham} away from the shore. Of course, two Hamiltonian systems \eqref{lifted_ham_y} and \eqref{main_Ham} are not identical as are not the operators $\widehat{\mathscr{H}}_{\rm Sh}$ and $\widehat H$. However, it can be seen that $\mathscr{H}_{\rm Sh}-H_0$ is proportional to $p_\varphi^2$, and thus the Hamiltonian vector fields $V_{\mathscr{H}_{\rm Sh}}$ and $V_{H_0}$ restricted to the hypersurface $\Sigma:=\{p_\varphi=0\}\equiv\{y_1\xi_2=y_2\xi_1\}$ coincide. 

Let us show that vector field $V$ obtained by restriction of $V_{\mathscr{H}_{\rm Sh}}=V_{H_0}$ on $\Sigma$ is completely integrable, meaning that $\Sigma$ is foliated into a two-parametric family of $3$-tori invariant under $V$. Indeed, two integrals in involution $F_1$ and $F_2$ can be given as follows: 
$$
F_1=p_\varphi, \quad F_2 = \frac{p_u^2g(v)f_2(u)-p_v^2f_1(u)}{f_1(u)+f_2(u)g(v)}\qquad \text{\bigg(respectively,}\quad F_1=y_1\xi_2-y_2\xi_1,
\quad F_2 = \frac{\xi^2 g(v)- p_v^2 f_1(\frac{y^2}{4})}{f_1(\frac{y^2}{4})+\frac{y^2}{4}g(v)}\bigg)
$$
away from the shore (respectively, near the shore). It is clear that two representations of $F_1$ define the same function. As to $F_2$, the difference of two representations (again by Theorem 2.1 from \cite{Unif23}) is proportional to $p_\varphi^2$. Next, it is clear that $H_0$, $F_1$ and $F_2$ in the representation away from the shore are Poisson commuting. Hence, equations $H_0=E, F_1=0, F_2=-\kappa$ determine a well-defined manifold denoted as $\Lambda_{E,\kappa}\subset T^*\mathfrak{C}$, which is Lagrangian and invariant under $V_{H_0}$, $V_{F_1}$ and $V_{F_2}$, moreover, these vector fields are commuting on $\Lambda_{E,\kappa}$. 

Manifold $\Lambda_{E,\kappa}$ is a 3-torus, parametrized by:
$$
\xi^2 = E f_1(y^2/4)-\kappa y^2/4,\quad p_v^2=Eg(v)+\kappa,\quad y_1\xi_2=y_2\xi_1
$$
near the shore, and
$$
p_u^2 = E f_1(u)-\kappa f_2(u), \quad p_v^2=Eg(v)+\kappa,\quad p_\varphi=0
$$
away from the shore. Let us choose the basis of cycles as follows: $\gamma_1$ by fixing all variables except for $u,p_u$; $\gamma_2$ -- except for $v,p_v$, and $\gamma_3$ -- except for $\varphi,p_\varphi$. Then the quantization conditions \eqref{BS0} for cycles $\gamma_1$ and $\gamma_2$ coincide exactly with \eqref{BS1}. The quantization condition for $\gamma_3$ is met automatically.

The solution of the unperturbed problem is then written as $\psi = K_{\Lambda}1$, where $\Lambda=\Lambda_{E,\kappa}$ satisfies \eqref{BS1}.

The perturbed problem is solved in a complete analogy with the case without a~shore. Namely, let us express $H_0$ as a~function of actions $I_1$, $I_2$ (defined in \eqref{Actions0}), and denote by $\omega = \frac{\partial H_0}{\partial I}$ the frequency vector. Let us choose an individual torus $\Lambda=\Lambda_{E,\kappa}$ corresponding to actions $ I_1, I_2$, and endow it with coordinates $\alpha^1,\alpha^2,\varphi$ (mod $2\pi$) such that the Hamiltonian vector field is given as $\dot\alpha=\frac{\omega}{\mathbf{f}(\alpha)}$ and $\dot\varphi=0$.

The subprincipal symbol of the operator $\widehat {\mathscr{H}}$ restricted on $\Lambda$ is also well-defined and given by
$$
H_{\rm sub}=\frac{4}{y^2}(\xi_1\:\: \frac{y_1p_v}{2})\mathbf{M}_1\big(y^2/4,v\big)\bigg(
\begin{aligned} \xi_2\\
\frac{y_2p_v}{2}
\end{aligned}
\bigg)\qquad \text{\Big(respectively,}\quad H_{\rm sub}=(p_u\:\:p_v)\mathbf{M}_1\Big(
\begin{aligned}&p_u\\&p_v\end{aligned}\Big)\Big)
$$
near the shore (respectively, away from the shore). Note that thanks to the third equality in \eqref{Bound_eq} $H_{\rm sub}$ is smooth everywhere on $\Lambda$.

Let us set $\bar q_\nu := I_\nu - I $ for $\nu\in\mathbb{Z}^2$ (as earlier, $I_\nu$ are values of actions satisfying \eqref{BS1}), and let $A(\alpha,\varphi)=A(\alpha)$ be a~smooth function on~$\Lambda$ satisfying the same transport equation \eqref{transporteqdefect2} as before.

As earlier (see Lemma~\ref{tr-sol-lambda}), the equation is solvable if $2$-D vector $\omega(\overline{I})$ is Diophantine, with $\lambda$ given by the same formula (Lemma~\ref{tr-sol-lambda}).
Then Proposition~\ref{CO-QC-gen} implies the following theorem:
\begin{theorem}\label{propcaseB}
Let a~function $A(\alpha)$ and a~number $\lambda$ give a~solution of \eqref{transporteqdefect2}. Then the expression $K_{\Lambda}^{\frac{d\alpha\wedge d\varphi}{\mathbf{f}}}\Big[Ae^{\frac{i\langle q,\alpha\rangle}{h}}\Big]$ is well-defined, and 
$$
\Big\|(\widehat H - \mathcal E)\widetilde \psi\Big\|_{L^2({\mathfrak C})}=O(h^2),\qquad  \widetilde \psi=K_{\bar \Lambda}[\overline A],\quad \mathcal E=E+h\lambda.
$$
\end{theorem}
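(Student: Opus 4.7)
The plan is to invoke the analog of Theorem~\ref{PrAsSolA} for the lifted, non-degenerate operator $\widehat{\mathscr H}$ on $\overline{\mathfrak C}$, and then descend to $\mathfrak C$ via the $\mathbb S^1$-invariance explained before the theorem. Two ingredients are needed: well-definedness of the canonical operator on the three-torus $\bar\Lambda$, and a commutation-plus-transport computation that kills the first-order remainder.

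First I would settle well-definedness. The Lagrangian $\bar\Lambda\subset T^*\overline{\mathfrak C}$ is a three-torus whose third basis cycle $\gamma_3$ (fixing everything except $\varphi,p_\varphi$) automatically satisfies both the action and the Maslov-index conditions, since $p_\varphi\equiv 0$ on $\bar\Lambda$ and the projection of $\gamma_3$ to $\overline{\mathfrak C}$ has no focal points. The defect $q=(q_1,q_2)$ thus concerns only $\gamma_1,\gamma_2$, and the amplitude $\bar A=A(\alpha)e^{i\langle q,\alpha\rangle/h}$ is $\varphi$-independent, so Proposition~\ref{CO-QC-gen} applies to $(\bar\Lambda,d\mu,\bar A)$ exactly as in the shoreless case and yields a well-defined $\widetilde\psi\in C^\infty(\overline{\mathfrak C})$. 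The rotation $\varphi\mapsto\varphi+\beta$ preserves $\bar\Lambda$, the measure $d\alpha\wedge d\varphi/\mathbf f$, and $\bar A$, so by equivariance of the canonical operator $\widetilde\psi\in C^\infty_{\mathbb S^1}(\overline{\mathfrak C})\cong C^\infty(\mathfrak C)$. The intertwining $\widehat{\mathscr H}|_{C^\infty_{\mathbb S^1}(\overline{\mathfrak C})}=\widehat H$ recalled in the excerpt then reduces the required $L^2(\mathfrak C)$ bound to the analogous bound on $\overline{\mathfrak C}$ (passing back to $\mathfrak C$ loses at most a uniform fiber-volume factor).

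Since $\widehat{\mathscr H}$ is a genuine non-degenerate semiclassical operator on $\overline{\mathfrak C}$, the standard commutation formula gives
\begin{equation*}
(\widehat{\mathscr H}-E)K_{\bar\Lambda}[\bar A]=-ih\,K_{\bar\Lambda}\bigl[(v_{\mathscr H}+iH_{\rm sub})\bar A\bigr]+O(h^2).
\end{equation*}
Because $V_{\mathscr H_{\rm Sh}}=V_{H_0}$ on $\Sigma=\{p_\varphi=0\}\supset\bar\Lambda$, and because $(\alpha,\varphi)$ are chosen so that $\dot\alpha=\omega/\mathbf f(\alpha)$ and $\dot\varphi=0$, only the $\alpha$-derivative contributes, giving
\begin{equation*}
v_{\mathscr H}\bar A=\frac{e^{i\langle q,\alpha\rangle/h}}{\mathbf f(\alpha)}\Bigl(\langle\omega,\partial_\alpha A\rangle+\tfrac{i\langle\omega,q\rangle}{h}A\Bigr).
\end{equation*}
Multiplying by $\mathbf f$ and invoking the transport equation~\eqref{transporteqdefect2} then yields $(v_{\mathscr H}+i(H_{\rm sub}-\lambda))\bar A=0$; replacing $E$ by $\mathcal E=E+h\lambda$ absorbs $\lambda$ and leaves the claimed $O(h^2)$ residual.

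The principal obstacle is the well-definedness step: one must make sense of the canonical operator on a torus where two quantization conditions are genuinely violated (through $q$) and the third is trivially satisfied ($0=0$). This is precisely the scenario handled by Proposition~\ref{CO-QC-gen} in Appendix~\ref{pr-co-def}, and once it is granted the remainder of the argument is a verbatim repetition of the proof of Theorem~\ref{PrAsSolA}, with the only new input being the observation that $\dot\varphi=0$ makes the extra cyclic coordinate invisible to the transport computation.
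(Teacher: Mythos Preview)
Your proposal is correct and follows exactly the approach the paper intends: the paper presents the theorem as a direct consequence of Proposition~\ref{CO-QC-gen} after setting up the lifted space, the three-torus $\bar\Lambda$, the triviality of the third quantization condition, and the coordinates with $\dot\varphi=0$, leaving the commutation-plus-transport computation to be read off verbatim from the proof of Theorem~\ref{PrAsSolA}. You have simply spelled out the details (the $\mathbb S^1$-equivariance of $K_{\bar\Lambda}[\bar A]$ and the descent from $\overline{\mathfrak C}$ to $\mathfrak C$) that the paper leaves implicit.
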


\section{Explicit Formulas}\label{Sec_exp_for}
\subsection{Explicit Formulas for Angle Coordinates on Torus}\label{Alpha_formulas}

Dynamics on $\Lambda$ for coordinates $u, v$ is given by
\begin{equation}\label{dynamicaleqs}
\dot u = \frac{\partial H}{\partial p_u}\equiv \pm\frac{2\sqrt{Ef(u)-\kappa} }{f(u)+g(v)}, \quad \dot v = \frac{\partial H}{\partial p_v}\equiv  \frac{2 \sqrt{Eg(v)+\kappa}}{f(u)+g(v)},
\end{equation}
or, substituting $f(u) = f_1(u)/f_2(u)$,
\begin{equation}\label{dynamicaleqsf1f2}
\dot u = \pm\frac{2\sqrt{Ef_1(u)-\kappa f_2(u)} \sqrt{f_2(u)} }{f_1(u)+f_2(u) g(v)}, \quad \dot v = \frac{2 \sqrt{Eg(v)+\kappa}}{f_1(u)+f_2(u)g(v)} f_2(u).
\end{equation}
It is necessary to note that for all three cases $f_1(u)+f_2(u)g(v)$ is a smooth function on $\mathfrak C$ and it does not vanish on the nonstandard caustics in the case with shore.
Take out the common denominator by the change $t\mapsto\tau$, $\frac{d\tau}{d t} = \frac{1}{f_1(u)+f_2(u)g(v)}$. Thus,
$$
\frac{d u}{d\tau} = \pm 2 \sqrt{E f_1(u)-\kappa f_2(u)}\sqrt{f_2(u)}, \quad \frac{d v}{d\tau} = 2 \sqrt{E g(v) + \kappa} f_2(u).
$$
To be certain, let the boundary of domain be $u = u_{\rm L}$ and $u = u_{\rm R}$.
Define functions $\alpha^1$, $\alpha^2$ (we show below that these functions are the variables parametrizing $\Lambda$, which we are looking for) by
\begin{equation}\label{alphas}
\begin{gathered}
\alpha^1(u) = \frac{\pi}{T_1}\int\limits_{u_{\rm L}}^u \frac{d \, \tilde u}{\sqrt{E f_1(\tilde u) - \kappa f_2(\tilde u)}\sqrt{f_2(\tilde u)}}, \quad \alpha^2(u,v) = \frac{2\pi \eta(v)}{\eta(2\pi)} + \frac{2 T_1}{\eta(2\pi)}\int\limits_0^{\alpha^1(u)} \Big( \langle f_2 \rangle_{\alpha^1} - f_2(u(\tilde \alpha^1)) \Big) d\, \tilde \alpha^1,
\end{gathered}
\end{equation}
where $T_1 = \int\limits_{u_{\rm L}}^{u_{\rm R}} \frac{d \tilde u}{\sqrt{E f_1(\tilde u) - \kappa f_2(\tilde u)}\sqrt{f_2(\tilde u)}}$, $\eta(v) = \int\limits_0^v \frac{d\tilde v}{\sqrt{E g(\tilde v)+\kappa}}$, $\langle f_2 \rangle_{\alpha^1} = \frac{1}{2\pi} \int\limits_0^{2\pi} f_2(u(\alpha^1)) d\alpha^1$ is an averaged by $\alpha^1$ function $f_2$.
Define $T_2 = \frac{\eta(v)}{2 \langle f_2 \rangle_{\alpha^1}}$ as well. The values $\alpha^1 = 0$ and $\alpha^1 = \pi$ correspond to the caustics $u = u_{\rm L}$ and $u = u_{\rm R}$ respectively. $\alpha^1\in (0,\pi)$ corresponds to $p_u>0$, and we can continue $\alpha^1$ to $p_u<0$ by symmetry.

\subsection{Main Theorem}
Let us obtain an explicit representation for the asymptotic eigenfunctions from Theorem~\ref{PrAsSolA} using explicit formulas for $K_\Lambda{A}$ from  \cite{ADNTs19}, \cite{AniRyk22} in a~neighborhood of a~simple caustic, and also the formulas from~\cite{Unif23} and Appendix~\ref{pr-co-def} for coastal caustic. Define functions
\begin{equation}\label{defect_mult}
B^{L}_\pm(u) = \exp \Big(\frac{i}{h}q_1(\pm \alpha^1(u) - \pi)\Big), \qquad B^{R}_\pm(u) = \exp \Big(\pm \frac{i}{h}q_1(\alpha^1(u) - \pi) \Big)
\end{equation} 
and their following combinations (with the arguments $u$ and $v$ omitted):
\begin{equation}
\begin{aligned}
& A_{\rm ev}^{L/R}= \frac{1}{2} \Big(B^{L/R}_+ A(\alpha^1, \alpha^2)+B^{L/R}_- A(-\alpha^1, \alpha^2) \Big) \exp{(\frac{i}{h} q_2 \alpha^2)},
\\ & A_{\rm odd}^{L/R} = \frac{1}{2} \Big(-B^{L/R}_+ A(\alpha^1, \alpha^2)+B^{L/R}_- A(-\alpha^1, \alpha^2) \Big) \exp{(\frac{i}{h} q_2 \alpha^2)}.
\end{aligned}
\end{equation}

Let us first formulate the assumptions under which the theorem is valid.
\begin{assum}\label{assumQC}
Let torus~$\Lambda(I)$ satisfy quantization conditions~\eqref{BS1}.
\end{assum}
\begin{assum}
Let the vector of frequencies~$(\omega_1,\omega_2)$ (see~Sec.~\ref{Alpha_formulas}) be Diophantine, i.e.
$$
\exists C_j>0 \text{ such that } |\omega_1 k_1 + \omega_2 k_2| \ge \frac{C_1}{(|k_1|+|k_2|)^{C_2}} \quad \forall (k_1,k_2)\in\mathbb Z^2\setminus (0,0).
$$
\end{assum}
\begin{assum}
Let an~action defect $q = I^\nu - I$ satisfy the estimate $q = O(h)$ uniformly in $\nu \in \mathcal I\subset \mathbb Z^2$.
\end{assum}
\begin{assum}\label{assumTrEqSol}
Let~$A(\alpha)\in C^\infty(\Lambda)$, $\lambda\in\mathbb R$ (see~Lemma~\ref{tr-sol-lambda}) be a~solution of the transport equation~\eqref{transporteqdefect2}.
\end{assum}
\begin{assum}\label{assumCases}
\begin{enumerate}[(A)] 
\item $f(u)$ is bounded near $u=u_{\rm L/R}$.
\item $f(u) \to \infty$ as $u\to u_{\rm L/R}$.
\item $f(u) \to \infty$ as $u\to u_{\rm L}$ and $f(u)$ is bounded near $u=u_{\rm R}$.
\end{enumerate}
\end{assum}
Further, introduce a~partition of unity $\rho_{\rm L}(u)+\rho_{\rm R}(u)\equiv 1$ on $[u_{\rm L}, u_{\rm R}]$ such that support of $\rho_{\rm L/R}$ lies outside $u=u_{\rm R/L}$.

\begin{theorem}
Let assumptions~\ref{assumQC}--\ref{assumTrEqSol} and one of assumptions~\ref{assumCases} (A), \ref{assumCases} (B) or \ref{assumCases} (C) hold.  Then $\psi_\nu = K_{\bar \Lambda}[A\,e^{\frac{i \langle \bar q_\nu, \alpha \rangle}{h}}]$, $\nu\in\mathcal{I}$, is a~series of asymptotic solutions of problem \eqref{mainprobmu} with asymptotic eigenvalues $\mathcal E_\nu = E_\nu + h \lambda_\nu$, i.e. the bound $\| (\widehat H-\mathcal E_\nu)\psi_\nu\| = O(h^2)$ is uniform by $\nu\in\mathcal{I}$, and the leading term of each asymptotic solution admits a~representation
\begin{equation}\label{AiAiprimerepr}
\psi = \frac{\sqrt{\pi (f(u)+g(v))}}{\sqrt{2 h \sqrt{E f(u)-\kappa}\sqrt{E g(v)+\kappa}}} e^{\frac{i}{h}S_2(v)} \Big( \rho_{\rm L}(u) \overline{W_{\rm Ai}^{\rm L}(u,v,h)} +  e^{\frac{i}{h}S_L(u_R)-\frac{\pi i}{2}} \rho_{\rm R}(u) W_{\rm Ai}^{\rm R}(u,v,h) \Big)
\end{equation}
in case (A), 
\begin{equation}\label{JJrepr}
\psi = \frac{\sqrt{\pi (f(u)+g(v))}}{\sqrt{2 h \sqrt{E f(u)-\kappa}\sqrt{E g(v)+\kappa}}} e^{\frac{i}{h}S_2(v)} \Big( \rho_{\rm L}(u) \overline{W_{\mathbf J}^{\rm L}(u,v,h)} +  e^{\frac{i}{h}S_L(u_R)-\frac{\pi i}{2}} \rho_{\rm R}(u) W_{\mathbf J}^{\rm R}(u,v,h) \Big)
\end{equation}
in case (B), and, finally,
\begin{equation}\label{JAirepr}
\psi = \frac{\sqrt{\pi (f(u)+g(v))}}{\sqrt{2 h \sqrt{E f(u)-\kappa}\sqrt{E g(v)+\kappa}}} e^{\frac{i}{h}S_2(v)} \Big( \rho_{\rm L}(u) \overline{W_{\mathbf J}^{\rm L}(u,v,h)} +  e^{\frac{i}{h}S_L(u_R)-\frac{\pi i}{2}} \rho_{\rm R}(u) W_{\rm Ai}^{\rm R}(u,v,h) \Big)
\end{equation}
in case (C),
where
$$
\begin{gathered}
W_{\rm Ai}^{\rm L/R}(u,v,h) =  A_{\rm ev}^{\rm L/R}(u,v) (\frac{3 S_{\rm L/R}(u)}{2 h})^{1/6}  {\rm Ai}(-(\frac{3 S_{\rm L/R}(u)}{2 h})^{2/3})  +  i A_{\rm odd}^{\rm L/R}(u,v) (\frac{3 S_{\rm L/R}(u)}{2 h})^{-1/6} {\rm Ai}'(-(\frac{3 S_{\rm L/R}(u)}{2 h})^{2/3}),\\
W_{\mathbf J}^{\rm L/R}(u,v,h) = \sqrt{S_{\rm L/R}(u)} \big( A_{\rm ev}^{\rm L/R}(u,v) {\mathbf J}_0\big( \frac{S_{\rm L/R}(u)}{h} \big) + i A_{\rm odd}^{\rm L/R}(u,v) {\mathbf J}_1\big( \frac{S_{\rm L/R}(u)}{h} \big).
\end{gathered}
$$
\end{theorem}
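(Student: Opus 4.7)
The plan is to split the argument into (i) the residual bound $\|(\widehat H-\mathcal E_\nu)\psi_\nu\|=O(h^2)$ uniformly in $\nu\in\mathcal I$, and (ii) the explicit representations \eqref{AiAiprimerepr}--\eqref{JAirepr}. Part (i) is, up to uniformity, already contained in Theorems~\ref{PrAsSolA} and~\ref{propcaseB}, so the real work is in part (ii).

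For part (i), I would invoke Theorem~\ref{PrAsSolA} in case (A) (no coastal caustic) and Theorem~\ref{propcaseB} in cases (B) and (C), each of which already produces an asymptotic solution of the form $K_{\bar\Lambda}[A\,e^{i\langle \bar q_\nu,\alpha\rangle/h}]$. Assumptions~\ref{assumQC}--\ref{assumTrEqSol} supply exactly the hypotheses: the Diophantine condition yields $A$ and $\lambda$ via Lemma~\ref{tr-sol-lambda}, and $\bar q_\nu=O(h)$ uniformly in $\nu\in\mathcal I$ keeps $e^{i\langle\bar q_\nu,\alpha\rangle/h}$ together with finitely many $\alpha$-derivatives bounded on $\Lambda$, converting the pointwise-in-$\nu$ bound of Theorem~\ref{PrAsSolA} into a uniform one.

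For part (ii) I would work separately in neighborhoods of $u=u_{\rm L}$ and $u=u_{\rm R}$ and glue via the partition $\rho_{\rm L}+\rho_{\rm R}\equiv 1$. Near a simple caustic (i.e.\ $u=u_{\rm R}$ in (A), (C) and $u=u_{\rm L}$ in (A)), $\Lambda$ is a two-sheeted fold cover of a neighborhood of the caustic, and the canonical-operator formula of \cite{ADNTs19,AniRyk22} represents $K_{\Lambda}[\overline A]$ locally as a combination of $\mathrm{Ai}$ and $\mathrm{Ai}'$. Pulling back $A\,e^{i\langle\bar q_\nu,\alpha\rangle/h}$ to the two sheets $\pm\alpha^1(u)$ yields exactly the factors $B^{L/R}_\pm A(\pm\alpha^1,\alpha^2)e^{iq_2\alpha^2/h}$, and the even/odd symmetrization forced by the fold formula turns these into $A^{L/R}_{\rm ev}$ and $A^{L/R}_{\rm odd}$. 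The common prefactor $\sqrt{\pi(f+g)/(2h\sqrt{Ef-\kappa}\sqrt{Eg+\kappa})}$ is the $(u,v)$-coordinate representation of the invariant half-density read off from \eqref{dynamicaleqs}; the phase $e^{iS_2(v)/h}$ is the $\gamma_2$-contribution; and $e^{iS_{\rm L}(u_{\rm R})/h-i\pi/2}$ is the combined action and Maslov jump on $\gamma_1$ between the $\rm L$- and $\rm R$-charts, producing \eqref{AiAiprimerepr}.

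Near a coastal caustic ($u=u_{\rm L}$ in cases (B), (C)) I would switch to the uniformized canonical operator on $\bar\Lambda$ in the polar coordinates $y=2\sqrt{u-u_{\rm L}}\,\mathbf n(\varphi)$ from Section~\ref{Sec_gen_th}. The Bessel-type model of \cite{Unif23} and Appendix~\ref{pr-co-def} replaces $\mathrm{Ai}$ by $\mathbf J_0(S_{\rm L}/h)$ and $\mathrm{Ai}'$ by $i\mathbf J_1(S_{\rm L}/h)$ against the same even/odd amplitudes; the extra $\sqrt{S_{\rm L}(u)}$ comes from the Jacobian of the half-density pullback between $y$- and $(u,v)$-coordinates. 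Combined with either the Airy or a second Bessel representation at the opposite endpoint, this gives \eqref{JJrepr} and \eqref{JAirepr}. The main obstacle is precisely this step: one must reconcile the multi-valuedness of $e^{iq_1\alpha^1/h}$ on $\Lambda$ (the $\gamma_1$-quantization condition is violated for generic $h$) with the single-valuedness of the lifted canonical operator, which is exactly what Proposition~\ref{CO-QC-gen} provides, and one must push the $\mathbb S^1$-averaging through the half-density normalization so that the coefficient multiplying $\sqrt{S_{\rm L}}$ in $W_{\mathbf J}^{\rm L}$ is exact. A secondary bookkeeping task is checking that the $\rm L$- and $\rm R$-local expressions agree to leading order on $\mathrm{supp}\,\rho_{\rm L}\cap\mathrm{supp}\,\rho_{\rm R}$; this is forced by \eqref{BS1} through the connecting phase $e^{iS_{\rm L}(u_{\rm R})/h-i\pi/2}$.
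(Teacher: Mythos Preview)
Your proposal is correct and follows essentially the same route as the paper: the paper's own proof is the two-line ``For case~(A), see~\cite{AniRyk22}. For cases~(B) and~(C), see~Appendix~\ref{pr-co-def},'' and your plan simply unpacks what those references contain---the residual bound from Theorems~\ref{PrAsSolA} and~\ref{propcaseB}, the Airy local model from \cite{ADNTs19,AniRyk22} at simple caustics, the Bessel model from \cite{Unif23} at coastal caustics, Proposition~\ref{CO-QC-gen} to handle the violated quantization conditions, and gluing via $\rho_{\rm L}+\rho_{\rm R}$. Your identification of the prefactor with the invariant half-density and of the connecting phase $e^{iS_{\rm L}(u_{\rm R})/h-i\pi/2}$ with the action plus Maslov jump is also exactly what the cited machinery produces.
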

\begin{proof}
For case~(A), see~\cite{AniRyk22}. For cases~(B) and~(C), see~Appendix~\ref{pr-co-def}.
\end{proof}

\subsection{Commentaries}
Note that our method allows one to fix a single Diophantine torus $\overline\Lambda$, and then consider a family of ``quantized'' tori in its $O(h)$-neighborhood  (on 2-dimensional plane of action variables). An important fact is that we can construct the entire spectral series by solving the transport equation only once. It is only necessary to recompute the number $\lambda$, since the action defect may change for different tori, but this is not that difficult.

Another advantage of our method is that in practice there is no need to precisely select the parameters $E$ and $\kappa$ in order to make the quantization conditions satisfied, an error of order $h$ is acceptable.

Often the condition for a torus to be Diophantine can be weakened: it is enough to require that the frequencies do not have resonances for such~$k$ that~$L_k \ne 0$. It seems appropriate to call such tori \textit{nonresonant}.

Considering a~nonperturbed problem and setting the action defect to zero, we obtain the formulas which coincide exactly with \cite{ADNTs21}.

Note also that when using this approach in practice, it is enough to compute only a~few Fourier coefficients $L_k$, $|k|\le N$, since the bound
$$
\Big\| (H_{\rm sub}-\lambda)\mathbf{f}(\alpha)  - \sum\limits_{|k|\le N} L_k e^{i \langle k, \alpha \rangle} \Big\| = O\Big(\frac{1}{N^w}\Big)
$$
holds. Then the bound $\| (\widehat H-\mathcal E_\nu)\psi_\nu\| = O(h^2)$ from the theorem becomes $\| (\widehat H-\mathcal E_\nu)\psi_\nu\| = O(h^2 + \frac{1}{N^{\widetilde w}})$, which is fine for practical use, considering~$\frac{1}{N^w}$ to be of the same order as~$h$ to some power $\ge 2$.  

We should also mention that although the problem has physical interpretation and the desired solution represents an~elevation of the free surface, the solution we obtain is a~complex-valued function. But there is nothing to worry about, as its real and imaginary parts are also solutions.

\section{Examples}\label{Sec_ex}

\subsection{Example 1: Away from the Shore}

Despite the apparent complexity, asymptotic eigenfunctions computed by formula \eqref{AiAiprimerepr} can be implemented, e.g.,  in Wolfram Mathematica or Maple.

Consider the following example. Let $f(u) = e^{u (2.7 - u)} - 1.03$, $g(v) = \frac{4}{5} + \frac{1}{3}\cos{(3v)}\sin^2{(v)}$, $D_1(u,v) = e^{\sin{(3u)}} \cos^2{(2v)}$, $E = 1$, $\kappa = -0.03$. The equation $E f(u) = \kappa$ has roots $u_{\rm L}^* = 0$ and $u_{\rm R}^* = 2.7$. Consider the problem for $\omega = 14.3$ and ''quantum numbers'' $\nu = (18, 12)$. Then $\bar q_\nu / h = (-0.8, -0.492)$, and the frequencies $(\omega_1, \omega_2) \approx (2.533, 1.7306)$ have only high resonances. 

Profile of bottom line for $v=0$ is depicted in Fig.~\ref{fig:exAbottom}.
Plot of leading term of asymptotics \eqref{AiAiprimerepr} is in Fig.~\ref{fig:exampleAiAi}.


\begin{figure}[h!]
\begin{minipage}{0.47\textwidth}\centering
\includegraphics[width=0.93\textwidth]{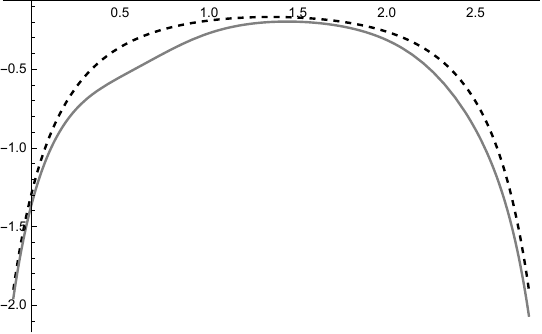}
\caption{Profile of bottom line for $v=0$. Black dashed line is bottom without perturbation, gray solid line is perturbed bottom.}
\label{fig:exAbottom}
\end{minipage}
\begin{minipage}{0.47\textwidth}\centering
\includegraphics[width=0.95\textwidth]{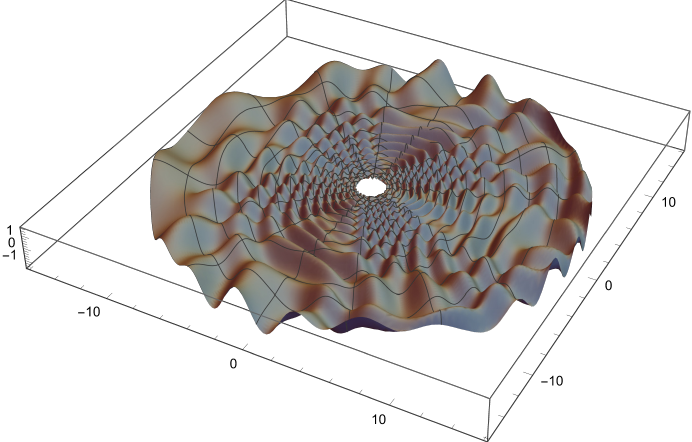}
\caption{Leading term of asymptotics $\psi$ by formula \eqref{AiAiprimerepr}.}
\label{fig:exampleAiAi}
\end{minipage}
\end{figure}


\subsection{Example 2: One Coastal Caustic}
Consider the problem with the following parameters:
$$
\begin{gathered}
f(u) = \frac{2}{3u} e^{-(u-\sqrt{2})^2} + \frac{2}{3} \sin{u}, \quad g(v) = 1 +\frac{2}{3} \sin{v},\quad E = 1,\quad a = f(2),\\ \omega = 17.8,\quad \nu = (10,18),\quad D_1(u,v) = \frac{2}{23} (2-u)^4 (e^u -1)^4 e^{\sin{2v}}.
\end{gathered}
$$
Then $u_{\rm L} = 0$, $u_{\rm R}^0 = 2$,  $h \approx 0.0562$, $\bar q_\nu/h \approx (0.0834, -0.3066)$, $(\omega_1,\omega_2) \approx (1.9955,1.94355)$. The profile of the bottom line for $v=0$ is depicted in Fig.~\ref{fig:exC-bottom}, and a~solution constructed using~\eqref{JAirepr} is depicted in Fig.~\ref{fig:caseC}.
\begin{figure}[h!]
\begin{minipage}{0.42\textwidth}
\includegraphics[width=0.78\textwidth]{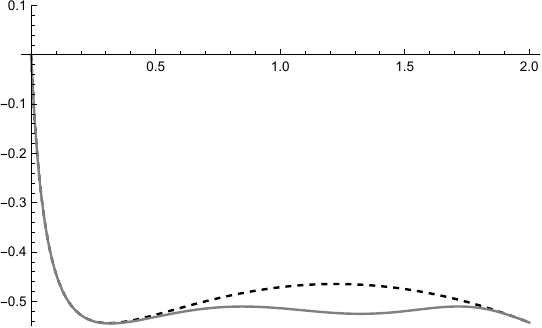}
\caption{The profile of the bottom line for $v=0$. Black dashed line is the unperturbed bottom, the gray solid is the perturbed bottom.}
\label{fig:exC-bottom}
\end{minipage}\hfill
\begin{minipage}{0.42\textwidth}
\centering
\includegraphics[width=0.78\textwidth]{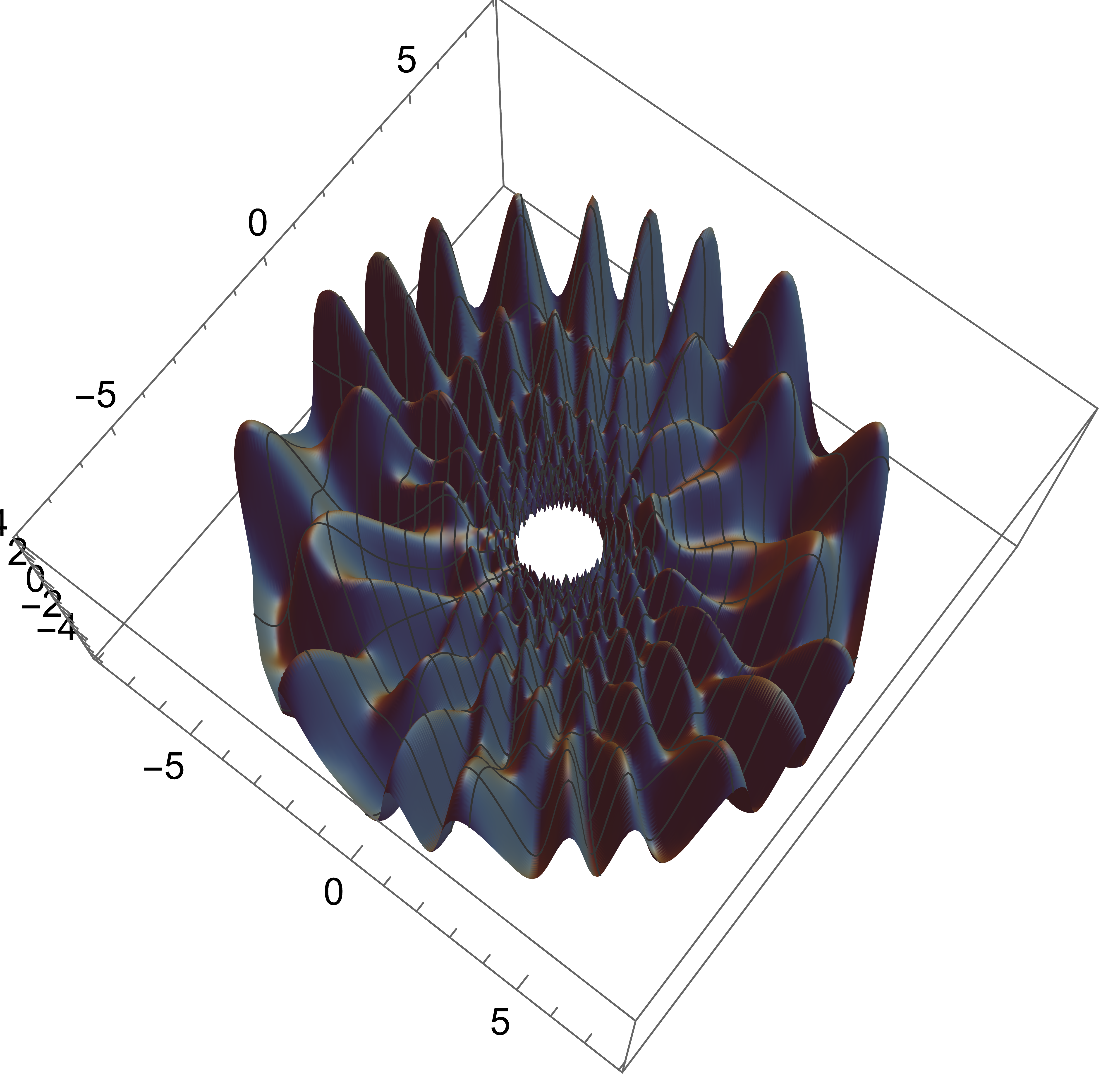}
\caption{An asymptotic solution $\psi_\nu$.}
\label{fig:caseC}
\end{minipage}
\end{figure}

\subsection{Example 3: Two Coastal Caustics}
Consider the further example:
\begin{equation}\label{main_example}
f(u) = \frac{1}{2u(1-u)}, \quad g(v)=3+\frac{1}{2}\sin{v}, \quad x_1 = e^u\cos{v}, \quad x_2 = e^u\sin{v},
\end{equation}
with $E=1$, $D_1(u,v) = 39 (1-u)^4 (e^u-1)^4 e^{\sin{3v}}$.
Singularities appear at the boundary defined by equations $u=0$ and $u=1$.
Phase trajectories of $H_0$ on the plane $(u,p_u)$ are depicted in Fig.~\ref{fig:stream}.

Consider the following sets of parameters:
\begin{enumerate}
\item $\kappa = -2.132799706586304$, $\omega = 12.08646478547537$, and $\nu = (10,11)$.
\item $\kappa = -2.029449909118645$, $\omega = 33.0692001800103$, and $\nu = (28,32)$.
\end{enumerate}

The profiles of the bottom for $v=0$ are depicted in Fig.~\ref{fig:exB-bottom}.

\begin{figure}[h!]
\begin{minipage}{0.45\textwidth}
\includegraphics[width=0.6\textwidth]{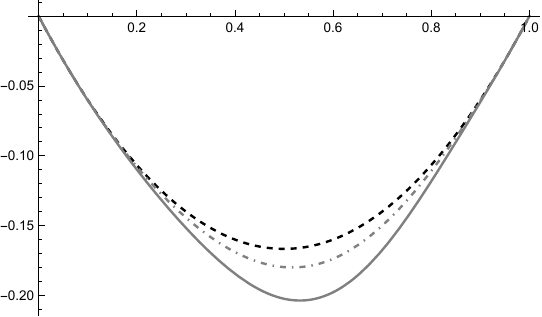}
\caption{The profiles of the bottom for $v=0$. Black dashed line is the unperturbed bottom, the gray solid one is the perturbed bottom (case 1), the gray dash-dotted one is the perturbed bottom (case 2).}
\label{fig:exB-bottom}
\end{minipage}\hfill
\begin{minipage}{0.45\textwidth}
\centering
\begin{center}
\includegraphics[width=0.6\textwidth]{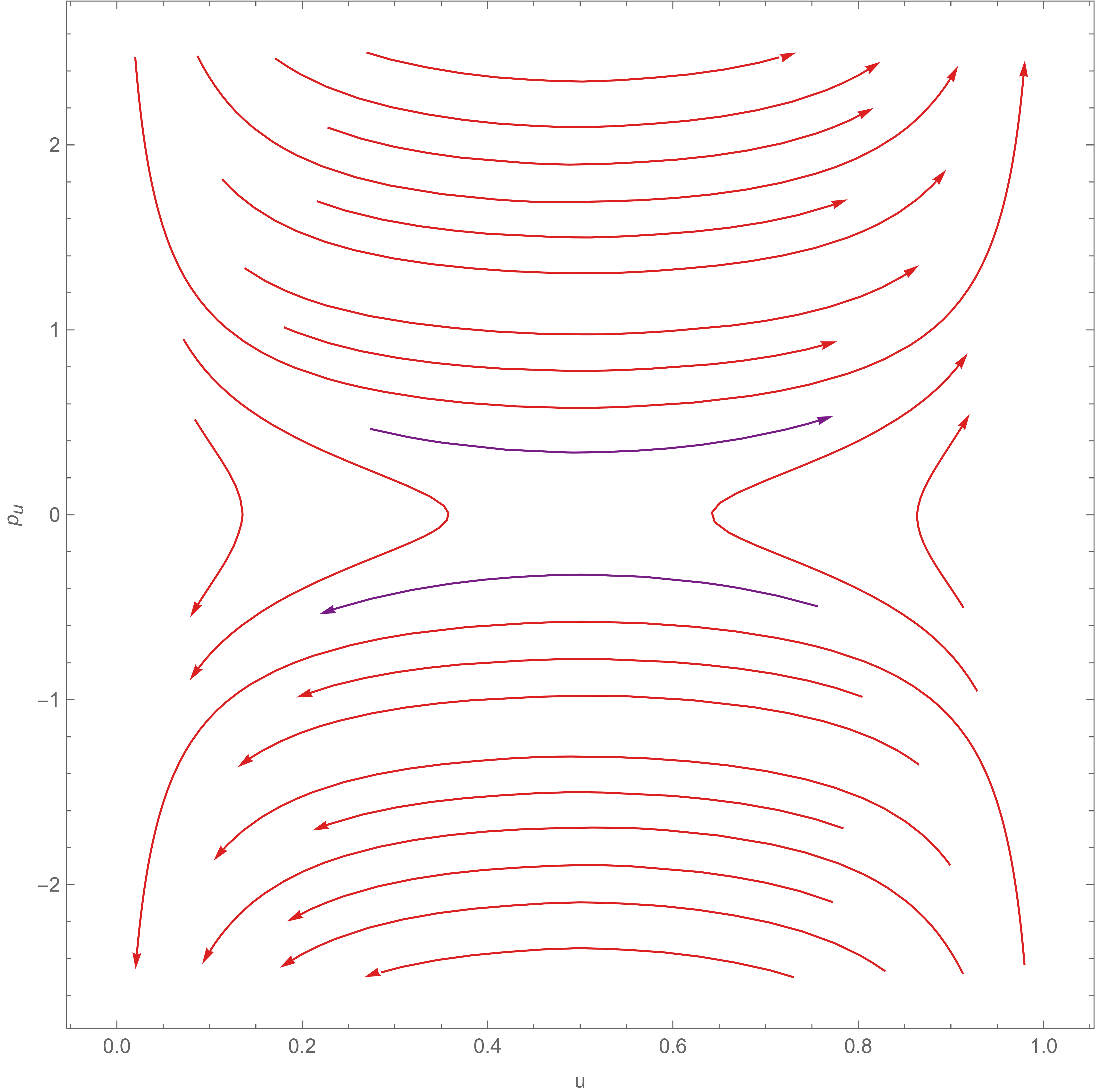}
\end{center}
\caption{The phase trajectories of $H_0 = \frac{p_u^2+p_v^2}{f(u)+g(v)}$.}
\label{fig:stream}
\end{minipage}
\end{figure}

The plots of the corresponding solutions $\psi_{\nu}$ are depicted in Fig.~\ref{fig:case1} and Fig.~\ref{fig:case2}.

\begin{figure}[h!]
\begin{minipage}{0.4\textwidth}
\centering
\includegraphics[width=0.78\textwidth]{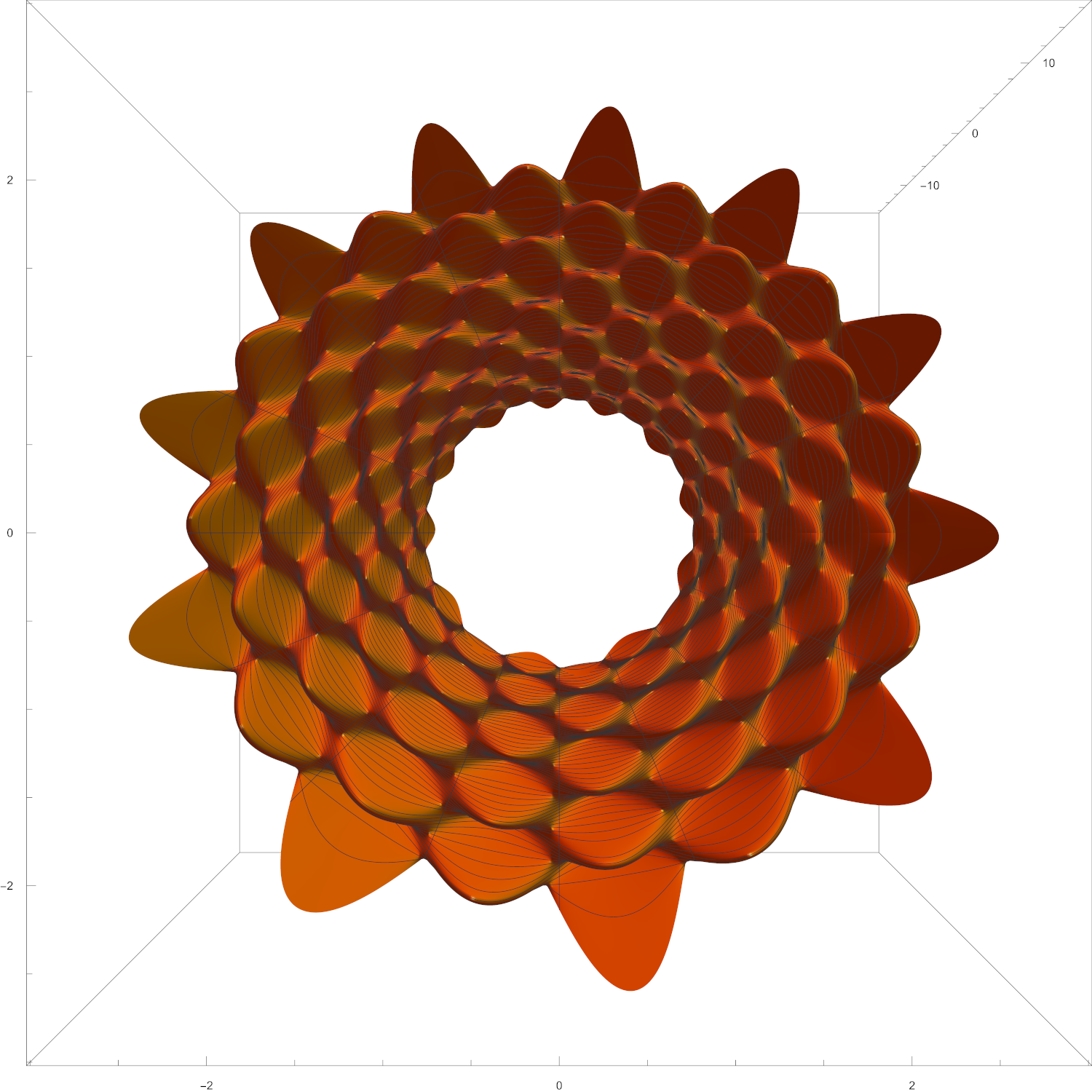}
\caption{Case 1.}
\label{fig:case1}
\end{minipage}
\begin{minipage}{0.4\textwidth}
\centering
\includegraphics[width=0.78\textwidth]{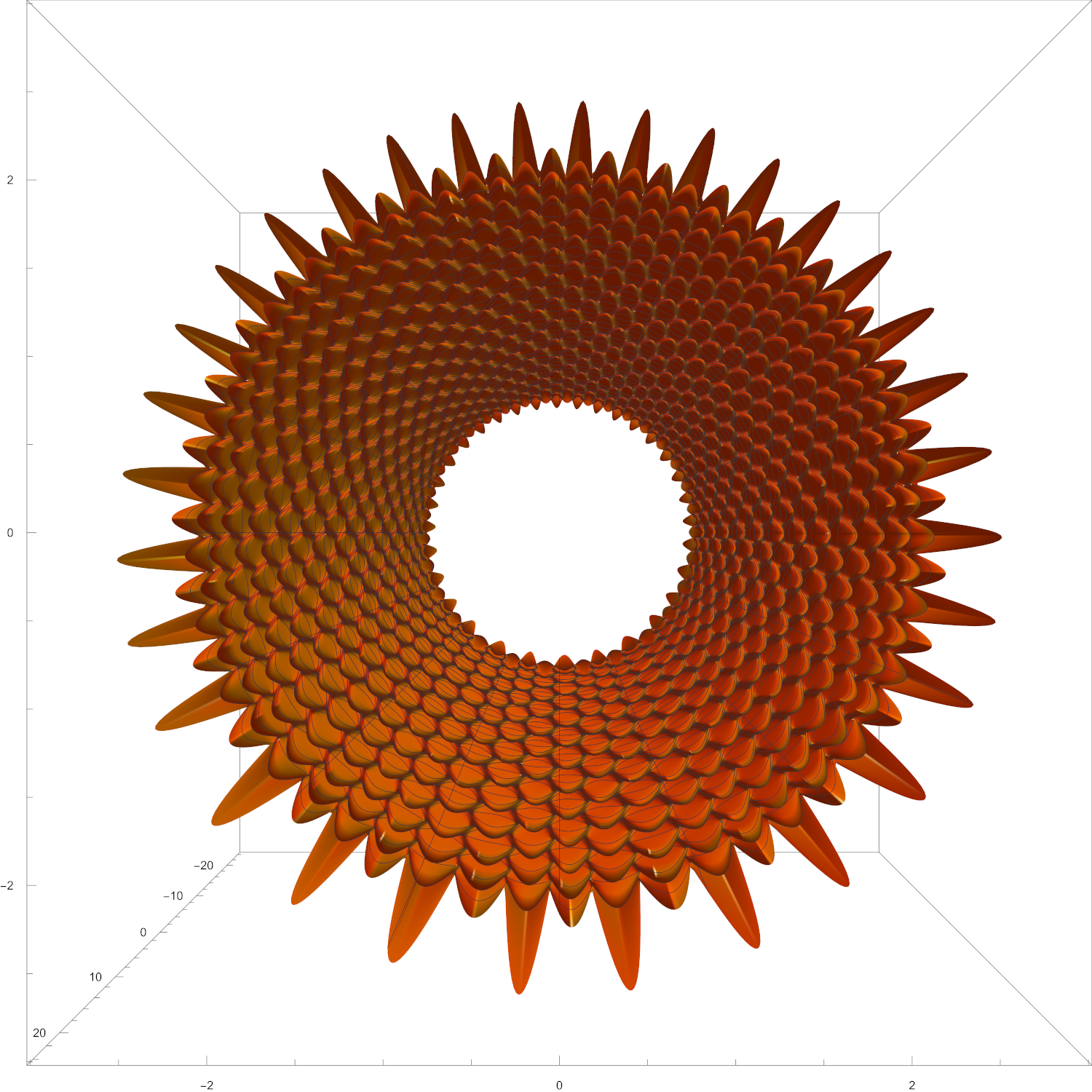}
\caption{Case 2.}
\label{fig:case2}
\end{minipage}
\end{figure}

\appendixtitleon
\appendixtitletocon
\begin{appendices}{\clearpage}

\section{Canonical Operator with Violated Quantization Conditions}
\label{pr-co-def}
\subsection{Preliminary Definitions}
Let $\Lambda = \{p=P(\alpha), x=X(\alpha)\}$ be a~Lagrangian torus in the standard symplectic space $\mathbb R^{2n}_{p,x}$, where $\alpha = (\alpha^1,\dots,\alpha^n)$, $\alpha^j\ {\rm mod}\, 2\pi$. Let us endow $\Lambda$ with a~nondegenerate $n$-form $d\mu = \mu(\alpha) d\alpha^1\wedge \dots\wedge d\alpha^n$, fix a~point $\alpha=\alpha_0\in\Lambda$, and define the argument of the Jacobian $\mathscr J^{\varepsilon}(\alpha_0)$, where
$$
\mathscr J^\varepsilon (\alpha) = \frac{d(X_1-i\varepsilon P_1)\wedge \dots \wedge d(X_n-i\varepsilon P_n)}{d\mu} = \frac{1}{\mu(\alpha)} {\rm det}\, [X_\alpha - i\varepsilon P_\alpha]
$$
for all $\varepsilon\in [0,1]$.
\subsection{Local Canonical Operator}
For a~fixed chart $U\subset\Lambda$ (a~simply connected domain) we define a~local canonical operator via nondegenerate phase function. It is a~function $\Phi(x,\theta)$ in a~simply connected domain $V\subset \mathbb R^{n+m}$ such that $d\Phi_{\theta^1}, \dots, d\Phi_{\theta^m}$ are linearly independent everywhere on the set $C_\Phi = \{ (x,\theta)\in U: \Phi_{\theta^1} = \dots = \Phi_{\theta^m} = 0\}$, see \cite{DNSh17}.
Then $C_\Phi$ is a~smooth $n$-manifold, and a~map $j_\Phi:V\to\mathbb R^{2n}_{p,x}$ is a~local diffeomorphism between $C_\Phi$ and $\Lambda_\Phi = j_\Phi(C_\Phi)$. If $\Lambda_\Phi = U$, then the chart $U$ is called a~$\Phi$-chart. Let us choose a~path $\gamma: [0,1]\to\Lambda$, which connects $\gamma(0) = \alpha_0$ and some point $\gamma(1)\in U$. For $A\in C_0^\infty(U)$ we now define an~action of canonical operator $K_{(U,\Phi, \gamma)}A$ associated with $U$, $\Phi$ and $\gamma$. To do that, define a~function
$$
F_{\Phi, d\mu} = \frac{d\mu\wedge(-d\Phi_\theta)}{dx\wedge d\theta}
$$
on $C_\Phi$, and
$$
{\rm arg}\, F_{\Phi, d\mu} = -\frac{\pi m}{2} - {\rm arg}\, \mathscr J^1(\gamma(1)) - {\rm arg}\, {\rm det}\, \begin{pmatrix} - i\Phi_{xx} & -i\Phi_{x\theta} \\ -i\Phi_{\theta x} & -i \Phi_{\theta\theta}\end{pmatrix},
$$ 
where the last term is understood as a~sum of complex arguments of eigenvalues of the matrix chosen to be in the interval $[-\pi/2, \pi/2]$. Let us now define
$$
K_{(U,\Phi,\gamma)} A = \frac{e^{\frac{\pi i m}{4}}}{(2\pi h)^{m/2}} e^{\frac{i}{h}s_U - \frac{\pi i}{2} m_U} \int_{\mathbb R^m} e^{\frac{i}{h}\Phi(x,\theta)} \sqrt{|F_{\Phi, d\mu}|} \widetilde A(x,\theta)\, d\theta,
$$
where $s_U = \int_\gamma P(\alpha)dX(\alpha) - \Phi(j_\Phi^{-1}(\gamma(1)))$, $m_U = -\frac{1}{\pi} {\rm arg}\, F_{\Phi, d\mu}$, and $\widetilde A$ is an arbitrary smooth function such that $j_{\Phi}\circ\widetilde A\big|_{C_\Phi}=A$.

\subsection{Quantization Conditions and Global Canonical Operator}
Now we define a~global canonical operator $K_{\Lambda, d\mu, \alpha_0, \mathscr J^\varepsilon (\alpha_0)}$ associated with the Lagrangian manifold $\Lambda$, which is endowed with $d\mu$, $\alpha_0$ and $\mathscr J^\varepsilon (\alpha_0)$. 

From now on we will assume that $\Lambda$ is a torus.
Let $\Gamma_1, \dots,\Gamma_n$ be basis cycles on $\Lambda$ corresponding to coordinates $\alpha^1,\dots,\alpha^n$ (modulo $2\pi$) respectively. Define 
\begin{equation}
\label{QC-def}
q_j = \frac{1}{4} {\rm ind}_{\Gamma_j} -\frac{1}{2\pi} \int_{\Gamma_j} P(\alpha) dX(\alpha), \qquad \text{where} 
\end{equation}
${\rm ind}_{\Gamma_j} = \underset{\varepsilon \to +0}{\rm lim}\, \Delta {\rm arg}_{\Gamma_j} \mathscr J^\varepsilon(\alpha)$ is the Maslov index, see \cite{Arnold67}. The standard quantization conditions $q_1,\dots,q_n\equiv 0$ ensure that the global canonical operator $K_{\Lambda,d\mu,\alpha_0,\mathscr J^\varepsilon(\alpha_0)}$ is well-defined on $C^\infty(\Lambda)$.

We generalize this construction by getting rid of quantization conditions. Let $\mathcal A$ be a~smooth function on $\overline \Lambda = \mathbb R^n$ (the universal covering of $\Lambda$) such that for all $\alpha$ and $j$: 
\begin{equation}
\label{QCond-Gen}
\mathcal A (\alpha^1,\dots,\alpha^j+2\pi,\dots, \alpha^n) = e^{2\pi i q_j} \mathcal A(\alpha^1,\dots,\alpha^n).
\end{equation} 
(In other words, $\mathcal A$ is $q$-Bloch for $q$ defined by \eqref{QC-def}).

Let us fix an~atlas of $\Phi$-charts $\Lambda = \bigcup\limits_{j=1}^N U_j$. For each $U_j$ we choose the phase function $\Phi_j$ defined on $U_j$, along with a~path $\gamma_j$. We also fix one of the points on $\overline \Lambda$ that projects onto $\alpha_0$ (by specifying $\alpha\in \mathbb{R}^n$ such that $\alpha \equiv \alpha_0$ (mod $2\pi$)). Denote by $A_j$ a~smooth function on $U_j$ defined by continuing $\mathcal A$ along the path $\gamma_j$ lifted to $\overline\Lambda$. Finally, let $1 = \sum\limits_{j=1}^N \chi_j$ be a~smooth partition of unity on $\Lambda$ such that ${\rm supp}\, \chi_j \subset U_j$. Let us now define
\begin{equation}
\label{COdef}
K_{\Lambda, d\mu, \alpha_0,\mathscr J^1(\alpha_0)}\mathcal A \equiv K_\Lambda \mathcal{A} = \sum_{j=1}^N K_{(U_j,\Phi_j,\gamma_j)} [\chi_j A_j].
\end{equation}
\begin{prop}
\label{CO-QC-gen}
Given conditions \eqref{QCond-Gen}, the expression $K_\Lambda \mathcal A$ defined by \eqref{COdef} is independent modulo $O(h)$ of the choice of the atlas $\bigcup_{j=1}^N U_j$, partition of unity $\{\chi_j\}$, phase functions $\Phi_j$ and paths $\gamma_j$.
\end{prop}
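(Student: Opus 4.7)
My plan is to verify, in turn, that the expression \eqref{COdef} is unchanged modulo $O(h)$ under each of the four possible variations: the partition of unity $\{\chi_j\}$, the phase function $\Phi_j$ on a fixed chart $U_j$, the connecting path $\gamma_j$ with fixed endpoint, and finally the atlas $\{U_j\}$ itself. Independence from the partition of unity is a formal consequence of chart-level invariance, so I would dispose of it first. Independence from $\Phi_j$ on a fixed chart is purely local: two nondegenerate phase functions parametrizing the same Lagrangian piece produce local canonical operators differing by $O(h)$ via a Morse-lemma-with-parameters computation, and this is essentially the local well-definedness result of \cite{DNSh17}, used verbatim and insensitive to whether $\mathcal{A}$ is single- or multi-valued.

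The substantive step is invariance under change of path $\gamma_j \to \gamma_j'$ with the same endpoints. Since $\gamma_j' - \gamma_j$ is a closed loop on $\Lambda$, it is homologous to $\sum_k n_k \Gamma_k$ for some integers $n_k$. Three pieces of the local formula then shift: (i) the action $s_U = \int_\gamma P\,dX - \Phi(j_\Phi^{-1}(\gamma(1)))$ picks up $\Delta s = \sum_k n_k \oint_{\Gamma_k} P\,dX$, producing a factor $e^{i\Delta s / h}$; (ii) the Maslov correction $m_U = -\frac{1}{\pi}\,{\rm arg}\,F_{\Phi,d\mu}$ shifts by $\Delta m = \sum_k n_k\,{\rm ind}_{\Gamma_k}$, since the continuous extension from $\mathscr{J}^\varepsilon$ to $\mathscr{J}^1$ along $\gamma$ winds by that amount, producing $e^{-i\pi\Delta m/2}$; (iii) the lifted function $A_j$, obtained by continuing $\mathcal{A}$ along the lifted path, acquires the Bloch monodromy $\exp\!\bigl(2\pi i\sum_k n_k q_k\bigr)$ by \eqref{QCond-Gen}. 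The combined phase multiplier is
$$
\exp\!\Bigl(\tfrac{i}{h}\Delta s - \tfrac{i\pi}{2}\Delta m + 2\pi i\sum_k n_k q_k\Bigr),
$$
and substituting the definition \eqref{QC-def} of $q_k$ (with the scaling convention converting the dimensionless $q$ of the appendix to the action defect $q/h$ of the main text) shows that the three terms cancel and the net factor is $1$.

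Invariance under change of the atlas then follows by passing to a common refinement and applying the phase-function and path invariances chart by chart: overlapping charts give consistent values for $\mathcal{A}$ precisely because the Bloch property, already exploited in step (iii), controls how continuations of $\mathcal{A}$ from $\alpha_0$ along different admissible paths differ. Assembling all four invariances yields the proposition modulo $O(h)$.

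The step I expect to be the main obstacle is (ii): verifying that ${\rm arg}\,\mathscr{J}^1(\gamma(1))$ increments by precisely $-\tfrac{\pi}{2}{\rm ind}_{\Gamma_k}$ as $\gamma$ traverses $\Gamma_k$ once. This requires tracking Arnold's $\varepsilon$-regularized Jacobian along the lifted path and matching the branch fixed at the base point $\alpha_0$ with its continuation, and it is here that sign conventions and the choice of eigenvalue arguments in $[-\pi/2,\pi/2]$ (entering ${\rm arg}\,F_{\Phi,d\mu}$) must be reconciled. Once that bookkeeping is in place, the cancellation is essentially the defining identity \eqref{QC-def} and no additional analysis is required.
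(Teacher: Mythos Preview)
Your proposal is correct and takes essentially the same approach as the paper: the core step is the cancellation of the action shift, the Maslov shift, and the Bloch monodromy via the defining identity \eqref{QC-def}, which is exactly the content of the paper's equation \eqref{A1qc}. The paper is organized a bit more economically---it reduces all four invariances at once to a single overlap-consistency check $K_{(U_1,\Phi_1,\gamma_1)}[\chi_0 A_1]\asymp K_{(U_2,\Phi_2,\gamma_2)}[\chi_0 A_2]$ and then quotes Lemmas~2--4 of \cite{DNSh17} for the path-change and phase-function-change steps rather than tracking $\arg\mathscr J^\varepsilon$ directly---so the Maslov bookkeeping you flag as the main obstacle is simply outsourced.
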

\begin{proof}
It is sufficient to prove that for each pair of triples $(U_1,\Phi_1,\gamma_1)$ and $(U_2,\Phi_2,\gamma_2)$ such that $U_1\bigcap U_2 \ne \emptyset$, and $\chi_0\in C_0^\infty(U_1\bigcap U_2)$ there holds $K_{(U_1,\Phi_1,\gamma_1)}[\chi_0 A_1]\asymp K_{(U_2,\Phi_2,\gamma_2)} [\chi_0 A_2]$ meaning that
\begin{equation}
\label{COtoprove}
K_{(U_1,\Phi_1,\gamma_1)}[\chi_0 A_1] = K_{(U_2,\Phi_2,\gamma_2)} [\chi_0 A_2 + O(h)].
\end{equation}

First, we show that $K_{(U_1,\Phi_1,\gamma_1')}A_1' \asymp K_{(U_1,\Phi_1,\gamma_1'')} A_1''$. Indeed, using \cite[Lemma 2]{DNSh17} by which 
$K_{(U_1,\Phi_1,\gamma_1')}A_1' \asymp K_{(U_1,\Phi_1,\gamma_1\sqcup\gamma_{10})}\widetilde A_1'$, where $\gamma_{10}$ is a path lying in $U_1$ and connecting $\gamma_1'(1)$ with $\gamma_1''(1)$, and the fact that the branches of $\mathcal{A}$ associated with the paths $\gamma_1'$ and $\gamma_1\sqcup\gamma_{10}$ are equal, we can restrict ourselves to the case $\gamma_1'(1)=\gamma_1''(1)$. 

Now let us expand contour $\Gamma:=\gamma_1'\sqcup (\gamma_1'')^{-1}$ into basis cycles: $\Gamma = \sum_{j=1}^n k_j \Gamma_j$ for $k\in\mathbb{Z}^n$, then it follows from \eqref{QCond-Gen} that
\begin{equation}
\label{A1qc}
A_1' = e^{2\pi i \langle k, q\rangle} A_1''= e^{-\frac{i}{h}\oint_\Gamma p\, dx + \frac{\pi i}{2}{\rm ind}_\Gamma} A_1''.
\end{equation}
According to \cite[Lemma~3]{DNSh17}, $K_{(U_1, \Phi_1, \gamma_1')} A_1'' \asymp e^{\frac{i}{h}\oint_\Gamma pdx - \frac{\pi i}{2}{\rm ind}_\Gamma}
K_{(U_1, \Phi_1, \gamma_1'')} A_1''$, which yields $K_{(U_1,\Phi_1,\gamma_1')}A_1' \asymp K_{(U_1,\Phi_1,\gamma_1'')} A_1''$.

Going back to \eqref{COtoprove}, let us choose a~path $\gamma_0$ connecting $\alpha_0$ with some point from $U_1\bigcap U_2$. Then, according to what we have proved:
$$
K_{(U_1, \Phi_1, \gamma_1)} [\chi_0 A_1] \asymp K_{(U_1, \Phi_1, \gamma_0)} [\chi_0 A_0], \qquad K_{(U_2, \Phi_2, \gamma_2)} [\chi_0 A_2] \asymp K_{(U_2, \Phi_2, \gamma_0)} [\chi_0 A_0],
$$
where $A_0$ is a~branch of $\mathcal A$ associated with $\gamma_0$. Finally, 
by \cite[Lemma~4]{DNSh17} it follows that
$
K_{(U_1, \Phi_1, \gamma_0)} [\chi_0 A_0] \asymp K_{(U_2, \Phi_2, \gamma_0)} [\chi_0 A_0].
$
Thus, formula \eqref{COtoprove} is proved, and so is Proposition~\ref{CO-QC-gen}.
\end{proof}

\end{appendices}

\end{document}